\tikzset{
  symbol/.style={
    draw=none,
    every to/.append style={
      edge node={node [sloped, allow upside down, auto=false]{$#1$}}}
  }
}
\newtheorem{thm}{Theorem}[section]
\newtheorem{prop}[thm]{Proposition}
\newtheorem{lem}[thm]{Lemma}
\newtheorem{cor}[thm]{Corollary}
\newtheorem{thmalpha}{Theorem}
\newcommand{\googlebooks}[1]{(preview at \href{https://books.google.com/books?id=#1}{google books})}
\newcommand{\numdam}[1]{}
\theoremstyle{remark}
\theoremstyle{definition}
\newtheorem{ex}[thm]{Example}
\newtheorem{defn}[thm]{Definition}
\newtheorem{quest}[thm]{Question}
\newtheorem{remark}[thm]{Remark}
\DeclareMathOperator{\ind}{Ind}
\DeclareMathOperator{\Tr}{Tr}
\DeclareMathOperator{\Ad}{Ad}
\DeclareMathOperator{\Fib}{\mathbf{Fib}}
\DeclareMathOperator{\tr}{tr}
\DeclareMathOperator{\End}{End}
\DeclareMathOperator{\QCA}{\mathbf{QCA}}
\DeclareMathOperator{\FDQC}{\mathbf{FDQC}}
\DeclareMathOperator{\coev}{coev}
\DeclareMathOperator{\ev}{ev}
\title{An index for quantum cellular automata on fusion spin chains}
\author{Corey Jones, Junhwi Lim}
\date{}
\begin{document}

\maketitle

\begin{abstract}
Interpreting the GNVW index for 1D quantum cellular automata (QCA) in terms of the Jones index for subfactors leads to a generalization of the index defined for QCA on more general abstract spin chains. These include fusion spin chains, which arise as the local operators invariant under a global (categorical/MPO) symmetry, and as the boundary operators of 2D topological codes. We show that for the fusion spin chains built from the fusion category $\mathbf{Fib}$, the index is a complete invariant for the group of QCA modulo finite depth circuits. 
\end{abstract}

\tableofcontents

\section{Introduction}

Quantum cellular automata (QCA) are models of discrete-time unitary dynamics for quantum spin systems \cite{https://doi.org/10.48550/arxiv.quant-ph/0405174}. They have many intriguing connections with a wide array of topics at the intersection of quantum information and condensed matter physics \cite{MR4029266,Farrelly2020reviewofquantum}. Recently, there has been significant interest in studying \textit{topological phases} of QCA as characterized by the group of quantum cellular automata modulo the normal subgroup of finite depth quantum circuits (FDQC) \cite{MR4309221, MR2890305, MR4103966, MR4381173, https://doi.org/10.48550/arxiv.2211.02086,  https://doi.org/10.48550/arxiv.2205.09141,  MR4544190}.\footnote{In some of these settings, other equivalence relations on QCA such as stable equivalence and blending are used.}

In this paper, we are interested in studying quantum cellular automata defined in the context of more general ``abstract" spin systems. Abstract spin systems are defined by nets of local finite-dimensional C*-algebras on a lattice, with the property that operators localized on disjoint regions commute (as in \cite{MR1441540}, see Definition \ref{AbstractSpinChain}). However, unlike ``concrete" spin systems, the local algebras are not required to factorize as tensor products of the algebras localized at individual sites. In this paper, our primary focus will be on \textit{fusion spin chains}, which are nets of algebras on a 1D lattice built from the tensor powers of an object in a fusion category.

Fusion spin chains arise naturally as local operators in a concrete spin chain that are invariant under a global symmetry (group, Hopf \cite{MR1463825}, weak Hopf \cite{WeakHopI, Molnar:2022nmh}, or categorical/MPO \cite{10.21468/SciPostPhys.10.3.053, MR4109480, MR4272039, 2205.15243, MR3719546}), and as boundary operators of 2D topological codes \cite{2307.12552}. There are obvious extensions of the concepts of QCA and FDQC in this context \cite{2304.00068}, which are motivated both by classifying dualities of symmetric local Hamiltonians \cite{Aasen_2016, https://doi.org/10.48550/arxiv.2008.08598, https://doi.org/10.48550/arxiv.2211.03777, PRXQuantum.4.020357} and boundary dynamics of periodic topological systems \cite{PhysRevX.3.031005, PhysRevB.99.085115, aasen2023measurement}, (see Section \ref{motivation} for further discussion). This gives rise to the purely mathematical problem of understanding the group $\QCA/\FDQC$ for a given fusion spin chain.

The GNVW index \cite{MR2890305} provides a complete characterization of QCA on a concrete spin chain up to finite-depth circuits in 1D. Our goal is to consider a generalization of the GNVW index to abstract spin chains by reinterpreting it in terms of the Jones index for subfactors. The Jones index for an inclusion of $\rm{II}_{1}$ factors $N\subseteq M$ is a numerical invariant introduced by V. Jones \cite{MR0696688}. Remarkably, this index is quantized, only taking values in the set $\{4\text{cos}^{2}(\frac{\pi}{n})\ :n\ge 3\}\cup [4,\infty]$. The Jones index is the foundation for the modern theory of subfactors, and has led to the discovery of its many connections with mathematical physics, representation theory, and low-dimensional topology 
\cite{MR0766964, MR2354658, MR1339767, EK23, MR3166042, MR1473221, MR1642584, math.QA/9909027}. It was observed in \cite{Vogts2009DiscreteTQ} that the GNVW index can be expressed as a ratio of Jones indices of subfactors constructed from the spin system algebras. Using a slightly different formulation, we show that this perspective can be used to make sense of the index for QCA on abstract spin chains satisfying what we call the \textit{finite index property}, which in particular holds for fusion spin chains. We demonstrate that the index gives a homomorphism from the group of QCA modulo FDQC to $\mathbbm{R}^{\times}_{+}$.

\begin{thmalpha}\label{indexhom}
    Let $A$ be an abstract spin chain that satisfies the \textit{finite index property} (Definition \ref{FiniteIndexProperty}). Then 
    $$
    \ind: \QCA(A)\rightarrow \mathbbm{R}^{\times}_{+}
    $$
    from Definition \ref{indef} is a homomorphism containing $\FDQC(A)$ in its kernel.

\end{thmalpha}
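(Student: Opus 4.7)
My plan is to verify the two claims---multiplicativity of $\ind$ and the vanishing on $\FDQC(A)$---by reducing both to standard properties of the Jones index applied to carefully chosen towers of subfactors coming from half-chain algebras. Before starting, I would unfold Definition~\ref{indef} into a ratio of Jones indices of the form $[M:N]$, where $M$ and $N$ are derived from a half-chain algebra $A_L$ and its image $\alpha(A_L)$, possibly intersected with a local algebra large enough to absorb the spread of $\alpha$. The finite index property guarantees that each such Jones index is finite, so the ratio lives in $\mathbbm{R}^\times_+$. The well-definedness of this ratio (independence of the cut point and of the size of the interval used to absorb the spread) is presumably proved where $\ind$ is defined; it is used heavily below.

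For multiplicativity, given $\alpha,\beta\in\QCA(A)$ I would first pick a cut point and a finite interval large enough to accommodate the spreads of $\alpha$, $\beta$, and $\alpha\beta$ simultaneously. I would then form a tower of subfactors interpolating between the three algebras $A_L$, $\beta(A_L)$, and $\alpha\beta(A_L)$, and invoke the multiplicative property $[P:R] = [P:Q][Q:R]$ for $R\subseteq Q \subseteq P$ of the Jones index. Applying $\alpha$, which preserves Jones index on inclusions in its domain, lets one compare the $\beta$-tower at the original cut with the $\alpha\beta$-tower, producing the identity $\ind(\alpha\beta) = \ind(\alpha)\ind(\beta)$ after the relevant ratios telescope. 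The main subtlety here is arranging a single cut so that the tower computation is valid for all three automorphisms at once; this is where well-definedness of $\ind$ is essential, since it lets one translate a computation at one cut to any other.

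For the vanishing on $\FDQC(A)$, once multiplicativity is in hand it suffices to show that a single layer of commuting local unitaries with disjoint bounded supports has index $1$. Such an $\alpha$ moves $A_L$ only inside a bounded region straddling the cut, so choosing the cut far to the right of this region makes $\alpha(A_L)$ and $A_L$ literally agree on the relevant interval and forces numerator and denominator of the index ratio to coincide. The main obstacle I anticipate is not any single step but the need to verify that the Jones index manipulations---multiplicativity in towers and invariance under $\alpha$---remain valid in the abstract spin chain setting, where the local algebras are not tensor factors and the relevant subfactors may arise as nontrivial unital inclusions of finite-dimensional or hyperfinite algebras rather than from simple tensor splittings. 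The finite index property is what makes this go through, and I would lean on it at each step to reduce to the standard Jones theory.
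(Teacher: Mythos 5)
Your approach for multiplicativity matches the paper's: choose a chain of cuts $y\le y'\le x$ so that $\mathcal{A}_y\subseteq\alpha(\mathcal{A}_{y'})\subseteq\alpha\beta(\mathcal{A}_x)$ and $\mathcal{A}_y\subseteq\mathcal{A}_{y'}\subseteq\mathcal{A}_x$ are nested towers, apply multiplicativity of the Jones index in both, and use that a trace-preserving automorphism preserves the Jones index to replace $[\alpha\beta(\mathcal{A}_x):\alpha(\mathcal{A}_{y'})]$ by $[\beta(\mathcal{A}_x):\mathcal{A}_{y'}]$; the paper then identifies the two resulting ratios as $\ind(\beta)^2$ and $\ind(\alpha)^2$, which requires the already-established independence of the choice of cut (this is the precise role of well-definedness, rather than ``arranging a single cut,'' but the spirit is the same).

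However, your argument that depth-one circuits lie in $\ker(\ind)$ has a genuine gap. You claim $\alpha$ ``moves $A_L$ only inside a bounded region straddling the cut, so choosing the cut far to the right of this region'' makes $\alpha(A_L)=A_L$. This reasoning is valid only for an automorphism with \emph{bounded} support (a finite product of local unitaries). A depth-one circuit $\alpha=\Ad\bigl(\prod_i u_i\bigr)$ with $u_i\in A_{I_i}$ is a product over an \emph{infinite} partition of $\mathbbm{Z}$, so there is no bounded region outside of which $\alpha$ acts trivially; moving the cut to the right simply moves the perturbed window with it. The correct observation, which the paper uses, is that the cut should be taken at the right endpoint $x=\sup I_j$ of one of the partition intervals. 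Then no single $u_i$ straddles the cut: the $u_i$ with $I_i\subseteq(-\infty,x]$ lie in $\mathcal{A}_x$ and conjugation by them preserves $\mathcal{A}_x$, while the $u_i$ with $I_i\subseteq[x+1,\infty)$ commute with $\mathcal{A}_x$ by locality, so they act trivially. Hence $\alpha(\mathcal{A}_x)=\mathcal{A}_x$ at that particular cut and $\ind(\alpha)=1$ by Definition~\ref{indef}. Without this specific choice of cut, the step as you describe it does not go through.
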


We then turn to the computation of the index in some concrete examples. We focus our attention on fusion spin chains and introduce a large family of examples of QCA with readily computable indices which we call ``generalized translations." These encapsulate ordinary translations but also more interesting examples that have non-trivial interaction with the DHR category in the sense of \cite{2304.00068}. In many cases of interest, all QCA on fusion spin chains are generalized translations up to composition by a finite depth circuit. 

We then give an in-depth analysis of the group of QCA for the fusion spin chain $A_{\tau}$ built from the rank two fusion category $\textbf{Fib}$ with simple objects $\{\mathbbm{1}, \tau \}$ and fusion rules $\tau^{\otimes 2}\cong \mathbbm{1}\oplus \tau$. This net of algebras has been studied in various physical settings and is closely related to ``the golden anyon chain" (see e.g. \cite{Feiguin:2006ydp, 10.1143/PTPS.176.384, MR3719546}. We show that the kernel of $\ind$ is precisely $\FDQC(A_{\tau})$. In particular, we obtain the following theorem:

\begin{thmalpha}\label{Fibthm}
Let $\phi=\frac{1+\sqrt{5}}{2}$. Then $$\ind:\QCA(A_{\tau})/\FDQC(A_{\tau})\rightarrow \{\phi^{n}\ :\ n\in \mathbbm{Z}\}\subseteq \mathbbm{R}^{\times}_{+}$$ is an isomorphism of groups. In particular, every QCA on $A_{\tau}$ is a composition of a finite depth circuit and a (honest) translation.
\end{thmalpha}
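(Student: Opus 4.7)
The plan is to invoke Theorem \ref{indexhom} to reduce the claim to two ingredients: (i) the image of $\ind:\QCA(A_{\tau})\rightarrow \mathbbm{R}^{\times}_{+}$ is exactly $\{\phi^{n} : n\in \mathbbm{Z}\}$, and (ii) the kernel of $\ind$ equals $\FDQC(A_{\tau})$. The final sentence of the theorem, that every QCA is a circuit composed with an honest translation, then falls out from (ii) together with the explicit generator of the image.

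For the image, I would first compute the index of the ordinary one-site translation $T$. Since $A_{\tau}$ is built from tensor powers of $\tau\in \mathbf{Fib}$ and $\tau$ has quantum dimension $\phi$, the Jones indices of the nested subfactors of local algebras appearing in Definition \ref{indef} should be explicit powers of $\phi^{2}$, and the ratio that defines $\ind(T)$ should come out to $\phi$. Iterating $T$ then shows that $\{\phi^{n}: n\in\mathbbm{Z}\}$ lies in the image. For the reverse containment, I would argue that for any QCA $\alpha$ the numerator and denominator of $\ind(\alpha)$ are Jones indices of intermediate subfactors between algebras whose inclusions have index a power of $\phi^{2}$; by Jones's quantization theorem (and the fact that $\mathbf{Fib}$ has no non-trivial intermediate fusion subcategories) each such intermediate index is itself a power of $\phi^{2}$, forcing $\ind(\alpha)\in \phi^{\mathbbm{Z}}$.

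The bulk of the proof is showing $\ker(\ind)=\FDQC(A_{\tau})$. Given $\alpha\in \QCA(A_{\tau})$ with $\ind(\alpha)=\phi^{n}$, set $\beta:=\alpha\circ T^{-n}$, so that $\ind(\beta)=1$ by Theorem \ref{indexhom}. It then suffices to show that any trivial-index QCA on $A_{\tau}$ is a finite depth circuit. My strategy is to use the \emph{generalized translations} developed earlier in the paper: first show that every QCA on $A_{\tau}$ is $\FDQC$-equivalent to a generalized translation (this should be the ``many cases of interest'' input applied to $\mathbf{Fib}$), and then observe that generalized translations on $A_{\tau}$ are classified by their index since the index is a homomorphism injective on this family. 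Hence $\ind(\beta)=1$ forces $\beta$ to be $\FDQC$-equivalent to the identity, i.e.\ $\beta\in \FDQC(A_{\tau})$, and $\alpha = \beta\circ T^{n}\in \FDQC(A_{\tau})\cdot T^{\mathbbm{Z}}$.

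The hardest step is the rigidity input: upgrading ``generalized translation with trivial index'' to ``honest $\FDQC$.'' This must exploit features specific to $\mathbf{Fib}$. The key structural fact I would lean on is that $\mathbf{Fib}$ is rank two with no non-trivial proper fusion subcategories, so the only DHR-style data a generalized translation can carry is its index. Concretely, I expect the argument to analyze the action of $\beta$ on the local algebras $A_{[i,j]}$, use the Fibonacci fusion rule $\tau\otimes\tau\cong \mathbbm{1}\oplus\tau$ to constrain $\beta$ to permute a canonical family of intermediate subalgebras only by a bounded shift, and then produce an explicit finite depth implementation of $\beta$ from this bounded-range data. The combinatorics of the golden mean and the absence of genuine ``internal'' symmetry in $\mathbf{Fib}$ are precisely what make this rigidity possible, and this is where I expect the technical work to concentrate.
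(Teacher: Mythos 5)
Your high-level structure --- compute the image, then show $\ker(\ind)=\FDQC$ by composing with an inverse translation and handling the index-$1$ case --- matches the paper's. But there are two genuine gaps.

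\textbf{Image.} Your appeal to Jones quantization plus ``$\mathbf{Fib}$ has no non-trivial intermediate fusion subcategories'' does not pin down the index values. The quantization theorem only restricts to $\{4\cos^2(\pi/n)\}\cup[4,\infty]$, and the issue is not intermediate fusion subcategories of $\mathbf{Fib}$ but rather intermediate \emph{subfactors} $\mathcal{A}_x\subsetneq P\subseteq\mathcal{A}_z$, which need not be ``fusion-categorical'' in an obvious way. The paper's argument is that $\mathbf{Fib}$ is \emph{torsion-free}: every indecomposable Q-system in $\mathbf{Fib}$ is Morita trivial, i.e.\ of the form $X\otimes\bar X$. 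Via Lemma \ref{FibIntermediate} this applies to $_{\alpha(\mathcal{A}_x)}L^2(\mathcal{A}_z)_{\alpha(\mathcal{A}_x)}$ inside $(\Fib_y)^*_{\alpha(\mathcal{A}_x)}\cong\Fib$, giving $[\mathcal{A}_z:\alpha(\mathcal{A}_x)]=\dim(X)^2$ with $\dim(X)\in\mathbbm{Z}[\phi]$. Combined with the invertibility from $\ind(\alpha^{-1})$ one lands in the unit group $\mathbbm{Z}[\phi]_+^\times=\{\phi^n\}$. Without this Q-system classification your step fails.

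\textbf{Kernel.} Your plan --- ``first show that every QCA on $A_\tau$ is $\FDQC$-equivalent to a generalized translation, and then observe that generalized translations are classified by their index'' --- is circular: that claim is essentially the theorem, and you give no mechanism for proving it. The paper never routes through generalized translations. It proves the index-$1$ case directly by an explicit local-unitary construction: Lemma \ref{pulldown} shows that if $\mathcal{A}_a\subseteq\alpha(\mathcal{A}_x)\subsetneq\mathcal{A}_b$ then there is a unitary $u\in A_{(a,b]}$ with $\mathrm{Ad}(u)\alpha(\mathcal{A}_x)\subseteq\mathcal{A}_{b-1}$. The proof of that lemma is the real work: it again invokes torsion-freeness to identify the standard invariant of $\alpha(\mathcal{A}_x)\subseteq\mathcal{A}_b$ with that of $\mathcal{A}_x\subseteq\mathcal{A}_b$, then uses a path-algebra construction (Lemma \ref{jonesproj}) to produce a projection $f_{b+1}$ with the same conditional expectation as the Jones projection $e_{b+1}$, and finally uses irrationality of $\phi$ (Lemma \ref{MvNequiv}) to get Murray--von Neumann equivalence $f_{b+1}=v^*e_{b+1}v$, from which the desired $u$ is extracted. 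Iterating this ``pulldown'' over sites in two staggered passes (Corollaries \ref{fixmany}--\ref{fixloc}) produces two depth-$1$ circuits that conjugate $\alpha$ to an automorphism fixing every $\mathcal{A}_x$, hence fixing all Jones projections, hence equal to the identity. Your sketch gestures at ``bounded-range data'' and ``explicit finite depth implementation'' but does not supply a construction; the missing idea is precisely this Jones-projection pulldown mechanism.
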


Despite the above theorem, we do not expect $\ind$ to be a complete invariant for fusion spin chains in general. Indeed, in \cite{2304.00068} it is shown that the group $\QCA/\FDQC$ can be non-abelian, even in the case of an abelian global gauge group on a 1D lattice. This implies that the numerical index cannot be complete in general since it is valued in an abelian group. However, we hope that by combining the index and the DHR invariant \cite{2304.00068} we will have a nearly complete picture of the group of topological phases of QCA on abstract spin chains (see Section \ref{Outlook} for further discussion).

\medskip

\textbf{Acknowledgements.} The authors would like to thank Dave Aasen, Dietmar Bisch, Jeongwan Haah, Andrew Schopieray and Dominic Williamson for helpful comments and conversations. The first author is supported by NSF Grant DMS- 2247202. The second author is supported by US ARO grant W911NF2310026.

\section{QCA on abstract spin chains}\label{basicdefn}

In this section, we define abstract spin chains and QCA.

\begin{defn}\label{AbstractSpinChain}
An \textit{abstract spin chain} consists of a unital C*-algebra $A$ and an order homomorphism from the poset $\text{Int}(\mathbbm{Z})$ of finite nonempty intervals in $\mathbbm{Z}$ to the poset of finite dimensional unital subalgebras of $A$, $I\mapsto A_{I}$, such that

\begin{enumerate}
\item
(Locality) If $I\cap J=\varnothing$, then $[A_{I},A_{J}]=0$,
\item
(Quasi-locality) $\bigcup A_{I}$ is norm dense in $A$.

\end{enumerate}

\end{defn}

\begin{remark}
We will typically abuse notation slightly and refer to the entire net by its quasi-local algebra $A$.
\end{remark}

As mentioned in the introduction, there are at least two motivations for considering abstract spin chains. The first is that abstract spin chains arise as nets of algebras invariant under global symmetries (group, Hopf \cite{MR1463825}, weak Hopf \cite{WeakHopI, Molnar:2022nmh}, or categorical/MPO \cite{10.21468/SciPostPhys.10.3.053, MR4109480, MR4272039, 2205.15243, MR3719546})
of concrete spin chains. Suppose we have a spin chain with a finite group of global symmetries acting on site. Define $A^{G}=\{a\in A\ : g(a)=a\ \text{for all}\ g\in G\}$. For each interval $I$, set $A^{G}_{I}= (A_{I})^{G}$. This will give an abstract spin chain which is \textit{not} a concrete spin chain, since the algebras $A^{G}_{I}$ do not split as a tensor product over the algebras localized at sites. This renders many aspects of the analysis of spin chains in the concrete case unworkable. This example is a special case of a more general example:

\begin{ex}\label{FusionCatNets} (Fusion spin chains)
Let $\mathcal{C}$ be a unitary fusion category and $X$ a choice of tensor generating object. We assume $X$ is self-dual and \textit{strongly tensor generating}, meaning there is a positive integer $n$ such that every simple object is a summand of $X^{\otimes n}$. We also assume that $\mathcal{C}$ is strict as a monoidal category (for convenience). Then for any interval $I\subseteq \mathbbm{Z}$, define,
$$A_{I}:= \mathcal{C}(X^{\otimes n}, X^{\otimes n})$$
where $n=|I|$, and for $I=[a,b]\subseteq J=[c,d]$
\begin{eqnarray*}
    &A_{I}\hookrightarrow A_{J}&\\
    \\
    &w\mapsto 1^{\otimes a-c}_{X}\otimes w\otimes 1_X^{\otimes d-b}.&
\end{eqnarray*}
We then take the colimit of these inclusions in the category of C*-algebras to obtain the quasi-local AF C*-algebras

$$A:=\lim_{\text{Int}(\mathbbm{Z})} A_{I}.$$

\noindent For notational convenience, we identify $A_{I}$ with its image in the colimit $A$. Clearly, this data defines an abstract spin chain with the quasi-local algebra $A$. We will denote this net $A(\mathcal{C}, X)$, and call this a \textit{fusion spin chain}.
Fusion spin chains are realized as the local operators in a spin chain invariant under a weak-Hopf \cite{WeakHopI, Molnar:2022nmh} or MPO symmetry \cite{10.21468/SciPostPhys.10.3.053, MR4109480, MR4272039, 2205.15243, MR3719546}.
Alternatively, they arise as the local boundary operators in topologically ordered 2+1 D spin system as in \cite{2307.12552}. Moreover, from a purely mathematical point of view, they arise from the lattice of relative commutants of a finite depth subfactor \cite{MR1642584, KawAnn}.

The self-duality of $X$ in the definition of the fusion spin chain is unnecessary. However, we require $X$ to be self-dual as it facilitates the application of subfactor theory. See Remark \ref{subfactor_perspective} for details.

\end{ex}

In the sequel, for any interval $I=[a,b]\subseteq \mathbbm{Z}$ and $l\ge 0$, we use the notation $I^{+l}=[a-l,b+l]$.

\begin{defn}
    A $*$-automorphism $\alpha\in \text{Aut}(A)$ on an abstract spin chain $A$ is called a \textit{quantum cellular automaton} (QCA) if there exists an integer $l\ge 0$ such that $\alpha(A_I)\subseteq A_{I^{+l}}$ and $\alpha^{-1}(A_{I})\subseteq A_{I^{+l}}$ for all $I\in \text{Int}(\mathbbm{Z})$.\footnote{The condition on $\alpha^{-1}$ follows automatically from the condition on $\alpha$ if we assume some version of Haag duality, see \cite{2304.00068}.} The minimum such $l$ is called the \textit{spread} of $\alpha$.
\end{defn}

It is clear that the composition and the inverse of QCA are QCA, and thus, QCA form a subgroup of $\text{Aut}(A)$. We denote this group $\QCA(A)$.

We now introduce the definition of finite depth circuits. To define a depth one quantum circuit, suppose we have of a partition $\{I_{i}\}_{i\in \mathbbm{N}}$ of $\mathbbm{Z}$ such that $\sup_{i}|I_{i}|=l<\infty$, and for each $i$ a unitary $u_{i}\in A_{I_i}$. From this data, we can construct a QCA on $A$ by defining, for any local operator $w\in A$
$$\alpha(w):=\left(\prod_{i}u_{i}\right)  w \left(\prod_{i} u^{*}_{i}\right).$$
Since any local $w$ commutes with all but finitely many of the $u_{i}$, the above product gives a well-defined $*$-automorphism on the local algebra $\bigcup_{I} A_{I}$. Furthermore, if $w\in A_{I}$, then $\alpha(w), \alpha^{-1}(w)\in A_{I^{+l}}$ (where $l$ is the largest diameter in the partition $\{I_{i}\}$) and hence, extends to a QCA on $A$. We say the QCA $\alpha$ is a \textit{depth one circuit}.

\begin{defn}
    A QCA $\alpha\in \text{Aut}(A)$ is a \textit{finite depth quantum circuit} if it can be written as a composition $\alpha_{1}\circ \alpha_{2}\circ\cdots\circ\alpha_{n}$ where each $\alpha_i$ is a depth one circuit.
\end{defn}

Clearly these form a subgroup of $\QCA(A)$, which we denote $\FDQC(A)$. It is shown in \cite{2304.00068} that $\FDQC(A)$ is a normal subgroup of $\QCA(A)$, which leads us to consider the group $\QCA(A)/\FDQC(A)$. In the setting of concrete spin systems, the group $\QCA(A)/\FDQC(A)$ can be interpreted as characterizing topological phases of discrete unitary dynamics \cite{https://doi.org/10.48550/arxiv.2205.09141}. If we view an abstract net as the observables under a (generalized) global symmetry, then $\QCA(A)/\FDQC(A)$ can be viewed as the topological phases of \textit{symmetric} unitary dynamics. 

\subsection{Physical motivation}\label{motivation}

There are many motivations from both physics and quantum information for studying QCA on concrete spin systems (see, for example, the review article \cite{Farrelly2020reviewofquantum}). In this section, we discuss physical motivations for studying QCA in our more general setting of abstract spin systems (see \cite[Section 2.2]{2304.00068} for additional discussion). 

\begin{enumerate}

\item QCA on spin systems model locally finite-dimensional discrete space/discrete time quantum field theories. In the concrete case, they have been used to approximate continuous quantum field theories (see the review \cite[Section 6.5]{Farrelly2020reviewofquantum}). Concrete spin systems are in some sense the ``topologically trivial" examples, since they have a trivial local superselection theory (i.e. DHR category \cite{2304.00068}). More general abstract spin systems can have non-trivial local superselection sectors if they are obtained, for example, by gauging a global symmetry. These then have the potential to approximate continuum theories with non-trivial superselection sectors which are important, for example, in chiral conformal field theories \cite{Frolich-Gabbiani, Kawahigashi-Longo-Muger}. %This gives a potentially new approach to search for conformal field theories with exotic superselection theories \cite{PhysRevLett.128.231602, PhysRevLett.128.231603}.

\item 
QCA map local Hamiltonians to local Hamiltonians. If the abstract spin system $A^{G}$ consists of local operators invariant under a global symmetry $G$ on the concrete spin system $G$, then $\QCA(A^{G})$ maps symmetric local Hamiltonians to symmetric local Hamiltonians. In some instances, a symmetric QCA mapping between Hamiltonians cannot be extended to a QCA defined on the concrete spin system $A$, hence the equivalence of the two theories 
%need to clarify what we mean by two theories
is only witnessed by taking symmetries into account. This is called \textit{duality}, the most famous example being Kramers-Wannier duality \cite{Aasen_2016, https://doi.org/10.48550/arxiv.2008.08598, https://doi.org/10.48550/arxiv.2211.03777, PRXQuantum.4.020357}. By characterizing symmetric QCA, we parameterize possible dualities of any given Hamiltonians.

\item 
Suppose we have a (n+1)D locally topologically ordered spin system. The study of topological floquet systems \cite{PhysRevB.99.085115, PhysRevX.6.041070} and paths of gapped Hamiltonians \cite{PhysRevB.106.085122} can be approximated by considering finite depth circuit $U$ intertwining the local ground state projections. If we cut a boundary in the spin system such that the finite depth circuit $U^{\prime}$ built from the terms localized on one side of the boundary commutes with the local ground state projections localized on that same side, then conjugation by $U^{\prime}$ induces a QCA on the net of \textit{boundary algebras} of that spin system. This ``chiral boundary dynamics" is then used to characterize the topological order of the bulk circuit.

\end{enumerate}

\noindent We have motivated the study of $\QCA$ on abstract spin systems, but why should we be interested in the quotient group $\QCA/\FDQC$? An immediate practical answer is that $\QCA$ is far too large and unwieldy as a group, and we would have little hope of understanding it. However, a large part of that complexity is contained in the subgroup $\FDQC$ and by quotienting it out, we obtain a manageable group that is amenable to study and classify.

A more principled answer comes from an operational definition of ``topological phases," introduced in \cite{PhysRevB.82.155138}. There it is proposed that two many-body states are in the same topological phase (or more properly, they have the same ``long-range entanglement structure") if there exists a finite depth circuit mapping one to the other. This suggests a natural interpretation of $\QCA/\FDQC$ as the group of topological phases of $\QCA$, as in \cite{https://doi.org/10.48550/arxiv.2205.09141}.

\section{Index for QCA on abstract spin chains}\label{index}

In this section, we will introduce a generalization of the GNVW index \cite{MR2890305} that applies to abstract spin chains with a unique tracial state, satisfying some finite index type properties.

First, recall a \textit{$\rm{II}_{1}$ factor} is an infinite-dimensional von Neumann algebra with trivial center and a normal tracial state. If an infinite dimensional C*-algebra $A$ has a unique tracial state $\tr$, then its bicommutant $A''$ in the GNS representation $L^2(A,\tr)$ is a $\rm{II}_{1}$ factor.
We give a sketch of the proof of this fact: 

Since $A$ has a unique tracial state, $A''$ also has a unique normal tracial state.
Indeed, for $x\in A''$ there is a sequence $\{x_n\}_n\in A$ converging to $x$ in the weak operator topology by the bicommutant theorem.
i.e. for all $a,b\in A$ $\lim\limits_{n\to \infty}\tr(ax_nb)=\tr(axb)$. In particular, when $a=b=1$, $\lim\limits_{n\to \infty}\tr(x_n)=\tr(x)$. It is easy to show that this is a well-defined normal tracial state on $A''$. If $A''$ admits another normal tracial state $\tr_0$, then $\tr_0|_{A}=\tr|_{A}$ by the uniqueness of the tracial state on $A$. Thus, by the normality, $\tr_0=\tr$.

Now, suppose $A''$ is not a factor. Then we can choose any positive element $z$ in the center such that $\tr(z)=1$ and $z\ne1$. Then $\tr(z\cdot(-))$ is another tracial state on $A''$. This contradicts the uniqueness of normal trace on $A''$. Hence, $A''$ is a factor.

For instance, let $A=\overline{\bigcup_{n=0}^\infty A_n}^{\|-\|}$ be an AF C*-algebra such that the inclusion matrix $T_n$ (or the Bratteli diagram) for $A_{n}\subset A_{n+1}$ is indecomposable and $T_n=T_{n-1}^t$ for all $n\ge 1$. Then the trace value of the minimal projections from each of the direct summand of $A_{n+1}$ is determined by the unique Perron-Frobenius eigenvector of $T_n^tT_n$. Hence, $A$ has a unique trace and $A''$ is a factor.

Now, let $M$ be a $\rm{II}_{1}$ factor. Associated to any Hilbert space representation $H$ of $M$ is its \textit{Murray-von Neumann dimension} $\text{dim}_{M}(H)\in \mathbbm{R}_{+}\cup \{\infty\}$. Given an inclusion of $\rm{II}_{1}$ factors $N\subseteq M$, we define the \textit{Jones index} \cite{MR0696688} as
$$
[M:N]:=\text{dim}_{N}(L^{2}(M)).
$$
A subfactor has \textit{finite index} if $[M:N]<\infty$. We recall one of the most useful properties of the Jones index: its multiplicativity. If $N\subseteq P\subseteq M$ is an inclusion of $\rm{II}_{1}$ factors and $[M:N]<\infty$, then $$[M:P], [P:N]<\infty$$ and $$[M:N]=[M:P][P:N].$$ 

\noindent We refer the reader to \cite{MR0696688, MR1473221}
for a proof and various other basic properties of the index.

There is an incredibly rich theory of finite index subfactors \cite{MR0696688,MR3166042}
and our goal is to make use of this theory in the study of QCA defined on abstract spin chains. To this end, we have the following definition, which allows us to make use of the Jones index.

\begin{defn}\label{FiniteIndexProperty}
Let $A$ be an abstract spin chain with a unique tracial state $\tr$, and let $\mathcal{A}:=A^{\prime \prime}\subseteq B(L^{2}(A, \tr))$. $A$ satisfies the (left)
\textit{finite index property} if for any $x\in \mathbbm{Z}$, 

\begin{enumerate}
    \item 
The subalgebra of $\mathcal{A}$, $\mathcal{A}_{x}:=(\bigcup_{I\le x} A_{I})^{\prime \prime}\subseteq B(L^{2}(A,\tr))$, is an infinite-dimensional factor.
\item 
For any $z>x$, the subfactor $\mathcal{A}_{x}\subseteq \mathcal{A}_{z}$ has finite index.
\end{enumerate}

\end{defn}

\begin{remark}
    Since $A$ has a unique tracial state, any QCA preserves $\tr$ and thus extends uniquely to an automorphism of $\mathcal{A}$. Furthermore, it is easy to see from the definitions that if $\alpha\in \QCA(A)$ has spread $l$, then $ \mathcal{A}_{x-l}\subseteq\alpha(\mathcal{A}_{x})\subseteq \mathcal{A}_{x+l}$.
\end{remark}

\begin{defn}\label{indef}
Let $A$ be an abstract spin chain and $\alpha$ be a QCA on $A$ satisfying the finite index property. Let $y\le x$ be integers such that $\mathcal{A}_y\subseteq \alpha(\mathcal{A}_x)$. Then the (left) \textit{index} of $\alpha$ is defined by
$$
\ind(\alpha)=\left(\frac{[\alpha(\mathcal{A}_x):\mathcal{A}_{y}]}{[\mathcal{A}_x:\mathcal{A}_y]}\right)^{1/2}.
$$
The numerator $[\alpha(\mathcal{A}_x):\mathcal{A}_{y}]$ is finite, since there is an integer $z\ge x$ such that $\mathcal{A}_y\subseteq\alpha(\mathcal{A}_x)\subseteq \mathcal{A}_z$ and we have $[\alpha(\mathcal{A}_x):\mathcal{A}_{y}]\le [\mathcal{A}_z:\mathcal{A}_y]<\infty$ by the finite index property.
\end{defn}

\begin{prop}  
$\ind(\alpha)$ is independent of the choice of $x$ and $y$.
\end{prop}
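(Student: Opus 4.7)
The plan is to exploit the multiplicativity of the Jones index for towers of $\rm{II}_1$ factors, together with the fact that any $*$-automorphism preserves the index of a subfactor inclusion. Given that $\alpha$ extends to an automorphism of $\mathcal{A}$ (as noted in the remark after Definition \ref{FiniteIndexProperty}) and that the finite index property guarantees all the relevant inclusions $\mathcal{A}_u \subseteq \mathcal{A}_v$ (for $u \le v$) are finite index inclusions of $\rm{II}_1$ factors, I can freely split indices along intermediate factors.

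First, I would show that for a fixed $x$, the ratio is independent of the choice of $y$. If $y' \le y \le x$ with $\mathcal{A}_y \subseteq \alpha(\mathcal{A}_x)$, then also $\mathcal{A}_{y'} \subseteq \mathcal{A}_y \subseteq \alpha(\mathcal{A}_x)$ and $\mathcal{A}_{y'} \subseteq \mathcal{A}_y \subseteq \mathcal{A}_x$ are towers of finite index $\rm{II}_1$ factors. Multiplicativity gives
$$[\alpha(\mathcal{A}_x):\mathcal{A}_{y'}] = [\alpha(\mathcal{A}_x):\mathcal{A}_y]\cdot[\mathcal{A}_y:\mathcal{A}_{y'}], \qquad [\mathcal{A}_x:\mathcal{A}_{y'}] = [\mathcal{A}_x:\mathcal{A}_y]\cdot[\mathcal{A}_y:\mathcal{A}_{y'}],$$
so the common factor $[\mathcal{A}_y:\mathcal{A}_{y'}]$ cancels in the ratio.

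Next, for fixed $y$, I would show independence of $x$. If $x \le x'$ and both $(x,y)$ and $(x',y)$ are valid choices, then $\mathcal{A}_x \subseteq \mathcal{A}_{x'}$ and $\alpha(\mathcal{A}_x) \subseteq \alpha(\mathcal{A}_{x'})$. Multiplicativity gives
$$[\alpha(\mathcal{A}_{x'}):\mathcal{A}_y] = [\alpha(\mathcal{A}_{x'}):\alpha(\mathcal{A}_x)]\cdot[\alpha(\mathcal{A}_x):\mathcal{A}_y], \qquad [\mathcal{A}_{x'}:\mathcal{A}_y] = [\mathcal{A}_{x'}:\mathcal{A}_x]\cdot[\mathcal{A}_x:\mathcal{A}_y].$$
Since $\alpha$ restricts to a $*$-isomorphism from $\mathcal{A}_{x'}$ onto $\alpha(\mathcal{A}_{x'})$ carrying $\mathcal{A}_x$ onto $\alpha(\mathcal{A}_x)$, the factors $[\alpha(\mathcal{A}_{x'}):\alpha(\mathcal{A}_x)]$ and $[\mathcal{A}_{x'}:\mathcal{A}_x]$ are equal and cancel in the ratio.

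Finally, given two arbitrary valid pairs $(x_1,y_1)$ and $(x_2,y_2)$, I would pick a common pair $(x,y)$ with $x \ge \max(x_1,x_2)$ and $y \le \min(y_1,y_2)$ taken small enough that $\mathcal{A}_y \subseteq \alpha(\mathcal{A}_x)$ (possible since $\alpha$ has finite spread, so for any $x$ the algebra $\alpha(\mathcal{A}_x)$ contains $\mathcal{A}_y$ for all sufficiently negative $y$). Applying the two independence results above, the value of the ratio at $(x_i,y_i)$ equals its value at $(x,y_i)$, which equals its value at $(x,y)$, yielding the same number for $i=1,2$. The only subtlety here, and the step requiring care, is ensuring that at each comparison we remain in the regime where the defining inclusion $\mathcal{A}_y \subseteq \alpha(\mathcal{A}_x)$ holds; this is straightforward from the spread bound on $\alpha$ and monotonicity of the net $\mathcal{A}_\bullet$.
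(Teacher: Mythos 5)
Your proof is correct and takes essentially the same approach as the paper: multiplicativity of the Jones index to cancel the common factor in the ratio for $y$-independence, and the fact that a $*$-isomorphism preserves indices (so $[\mathcal{A}_{x'}:\mathcal{A}_x]=[\alpha(\mathcal{A}_{x'}):\alpha(\mathcal{A}_x)]$) for $x$-independence. Your closing paragraph making the reduction to a common pair $(x,y)$ explicit is a tidy formalization of what the paper leaves implicit, but the substance is the same.
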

\begin{proof}
We first prove the independence of $y$. Suppose $x$ is fixed and $y$ is the maximum integer that satisfies the condition in Definition \ref{indef}. For $y'\le y\le x$, 
\begin{align*}
    \frac{[\alpha(\mathcal{A}_x):\mathcal{A}_{y'}]}{[\mathcal{A}_x:\mathcal{A}_{y'}]}=\frac{[\alpha(\mathcal{A}_x):\mathcal{A}_{y}][\mathcal{A}_{y}:\mathcal{A}_{y'}]}{[\mathcal{A}_x:\mathcal{A}_y][\mathcal{A}_{y}:\mathcal{A}_{y'}]}=\frac{[\alpha(\mathcal{A}_x):\mathcal{A}_{y}]}{[\mathcal{A}_x:\mathcal{A}_y]}.
\end{align*}
Hence, $\ind(\alpha)$ is independent of $y$. 

Next, we show the independence of $x$. Suppose $x,x', y\in\mathbbm{Z}$ such that $y\le x'\le x$ and $\mathcal{A}_y\subseteq \alpha(\mathcal{A}_{x'})\subseteq\alpha(\mathcal{A}_{x})$. Then
\begin{align*}
    \frac{[\alpha(\mathcal{A}_x):\mathcal{A}_{y}]}{[\mathcal{A}_x:\mathcal{A}_y]}&=\frac{[\alpha(\mathcal{A}_x):\alpha(\mathcal{A}_{x'})][\alpha(\mathcal{A}_{x'}):\mathcal{A}_{y}]}{[\mathcal{A}_x:\mathcal{A}_{x'}][\mathcal{A}_{x'}:\mathcal{A}_y]}=\frac{[\alpha(\mathcal{A}_{x'}):\mathcal{A}_{y}]}{[\mathcal{A}_{x'}:\mathcal{A}_y]}
\end{align*}
The last identity follows from $[\mathcal{A}_x:\mathcal{A}_{x'}]=[\alpha(\mathcal{A}_x):\alpha(\mathcal{A}_{x'})]$.
\end{proof}

\begin{prop} 
Let $\alpha$ and $A$ be as in Definition \ref{indef} and let $y\le x\le z$ be integers satisfying $\mathcal{A}_y\subseteq \alpha(\mathcal{A}_x)\subseteq\mathcal{A}_z$. Then 
\begin{align*}
    \ind(\alpha)=\left(\frac{[\mathcal{A}_z:\mathcal{A}_x]}{[\mathcal{A}_{z}:\alpha(\mathcal{A}_x)]}\right)^{1/2}=\left(\frac{[\mathcal{A}_z:\mathcal{A}_x][\alpha(\mathcal{A}_x):\mathcal{A}_{y}]}{[\mathcal{A}_{z}:\alpha(\mathcal{A}_x)][\mathcal{A}_x:\mathcal{A}_y]}\right)^{1/4}.
\end{align*}
\end{prop}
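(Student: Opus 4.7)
The plan is to reduce both equalities to the multiplicativity of the Jones index applied to the two chains of subfactors we have at our disposal. By hypothesis we have the tower $\mathcal{A}_y \subseteq \mathcal{A}_x \subseteq \mathcal{A}_z$, and by the assumption $\mathcal{A}_y \subseteq \alpha(\mathcal{A}_x) \subseteq \mathcal{A}_z$ we also have the tower $\mathcal{A}_y \subseteq \alpha(\mathcal{A}_x) \subseteq \mathcal{A}_z$. All of these intermediate algebras are $\mathrm{II}_1$ factors (the original $\mathcal{A}_x$, $\mathcal{A}_y$, $\mathcal{A}_z$ by the finite index property, and $\alpha(\mathcal{A}_x)$ because it is isomorphic to $\mathcal{A}_x$ via the $*$-automorphism $\alpha$), and all containments have finite index since they sit inside the finite-index inclusion $\mathcal{A}_y\subseteq\mathcal{A}_z$.

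For the first equality, I would apply the multiplicativity of the Jones index to both towers to obtain
\begin{equation*}
[\mathcal{A}_z:\mathcal{A}_y] \;=\; [\mathcal{A}_z:\mathcal{A}_x]\,[\mathcal{A}_x:\mathcal{A}_y] \;=\; [\mathcal{A}_z:\alpha(\mathcal{A}_x)]\,[\alpha(\mathcal{A}_x):\mathcal{A}_y].
\end{equation*}
Rearranging gives
\begin{equation*}
\frac{[\alpha(\mathcal{A}_x):\mathcal{A}_y]}{[\mathcal{A}_x:\mathcal{A}_y]} \;=\; \frac{[\mathcal{A}_z:\mathcal{A}_x]}{[\mathcal{A}_z:\alpha(\mathcal{A}_x)]},
\end{equation*}
and taking square roots yields the first stated identity for $\ind(\alpha)$.

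For the second equality, observe that the square of $\ind(\alpha)$ admits two different expressions, one from Definition \ref{indef} and one just obtained. Multiplying these two expressions yields
\begin{equation*}
\ind(\alpha)^{4} \;=\; \frac{[\alpha(\mathcal{A}_x):\mathcal{A}_y]}{[\mathcal{A}_x:\mathcal{A}_y]} \cdot \frac{[\mathcal{A}_z:\mathcal{A}_x]}{[\mathcal{A}_z:\alpha(\mathcal{A}_x)]} \;=\; \frac{[\mathcal{A}_z:\mathcal{A}_x]\,[\alpha(\mathcal{A}_x):\mathcal{A}_y]}{[\mathcal{A}_z:\alpha(\mathcal{A}_x)]\,[\mathcal{A}_x:\mathcal{A}_y]},
\end{equation*}
and extracting a fourth root gives the symmetric formula. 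There is really no serious obstacle here: the only subtlety to verify is that all four indices appearing are finite (which follows from the finite index property together with the inclusion $\alpha(\mathcal{A}_x)\subseteq \mathcal{A}_z$) and that $\alpha(\mathcal{A}_x)$ is itself a factor, so that the multiplicativity of the Jones index applies to both towers.
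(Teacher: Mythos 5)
Your proof is correct and follows essentially the same route as the paper: both use multiplicativity of the Jones index on the two towers $\mathcal{A}_y\subseteq\mathcal{A}_x\subseteq\mathcal{A}_z$ and $\mathcal{A}_y\subseteq\alpha(\mathcal{A}_x)\subseteq\mathcal{A}_z$ to rewrite $\ind(\alpha)^2$, and then multiply the two expressions and take a fourth root for the second identity. Your added remark verifying that $\alpha(\mathcal{A}_x)$ is a factor and that all indices involved are finite is a sensible (if implicit in the paper) sanity check.
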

\begin{proof}
    We first show the first identity.
    \begin{align*}
        \ind(\alpha)^2=\frac{[\alpha(\mathcal{A}_x):\mathcal{A}_{y}]}{[\mathcal{A}_x:\mathcal{A}_y]}=\frac{[\mathcal{A}_z:\mathcal{A}_y]/[\mathcal{A}_{z}:\alpha(\mathcal{A}_x)]}{[\mathcal{A}_z:\mathcal{A}_y]/[\mathcal{A}_{z}:\mathcal{A}_x]}=\frac{[\mathcal{A}_{z}:\mathcal{A}_x]}{[\mathcal{A}_{z}:\alpha(\mathcal{A}_x)]}.
    \end{align*}
    Observe that
    $$
    \ind(\alpha)=\ind(\alpha)^{1/2}\left(\frac{[\mathcal{A}_z:\mathcal{A}_x]}{[\mathcal{A}_{z}:\alpha(\mathcal{A}_x)]}\right)^{1/4}=\left(\frac{[\mathcal{A}_z:\mathcal{A}_x][\alpha(\mathcal{A}_x):\mathcal{A}_{y}]}{[\mathcal{A}_{z}:\alpha(\mathcal{A}_x)][\mathcal{A}_x:\mathcal{A}_y]}\right)^{1/4}.
    $$
    Hence, we have the second identity.
\end{proof}

\begin{prop}
    $\ind(\alpha\circ\beta)=\ind(\alpha)\ind(\beta)$
\end{prop}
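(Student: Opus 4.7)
The plan is to reduce multiplicativity of $\ind$ to multiplicativity of the Jones index after a careful lining up of integer parameters. Let $l_\alpha$ and $l_\beta$ denote the spreads of $\alpha$ and $\beta$. Fix any $x\in\mathbbm{Z}$, pick $y_1\le x-l_\beta$ so that $\mathcal{A}_{y_1}\subseteq \beta(\mathcal{A}_x)$ (possible because $\beta(\mathcal{A}_x)\supseteq \mathcal{A}_{x-l_\beta}$ by the remark following Definition \ref{FiniteIndexProperty}), and then pick $y_2\le y_1-l_\alpha$ so that $\mathcal{A}_{y_2}\subseteq\alpha(\mathcal{A}_{y_1})$ for the same reason. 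These choices are what make the whole argument work.

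With these parameters in hand, the preceding independence proposition lets us compute all three indices from one coherent set of inclusions:
\begin{align*}
    \ind(\beta)^2 &= \frac{[\beta(\mathcal{A}_x):\mathcal{A}_{y_1}]}{[\mathcal{A}_x:\mathcal{A}_{y_1}]}, &
    \ind(\alpha)^2 &= \frac{[\alpha(\mathcal{A}_{y_1}):\mathcal{A}_{y_2}]}{[\mathcal{A}_{y_1}:\mathcal{A}_{y_2}]}, &
    \ind(\alpha\beta)^2 &= \frac{[\alpha\beta(\mathcal{A}_x):\mathcal{A}_{y_2}]}{[\mathcal{A}_x:\mathcal{A}_{y_2}]},
\end{align*}
where the last expression is valid since $\mathcal{A}_{y_2}\subseteq \alpha(\mathcal{A}_{y_1})\subseteq \alpha\beta(\mathcal{A}_x)$.

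The main step is then to apply multiplicativity of the Jones index to the two towers $\mathcal{A}_{y_2}\subseteq \mathcal{A}_{y_1}\subseteq \mathcal{A}_x$ and $\mathcal{A}_{y_2}\subseteq \alpha(\mathcal{A}_{y_1})\subseteq \alpha\beta(\mathcal{A}_x)$, and to use that $\alpha$, being a $*$-automorphism of the ambient factor $\mathcal{A}$, preserves the Jones index of subfactor inclusions, so that $[\alpha\beta(\mathcal{A}_x):\alpha(\mathcal{A}_{y_1})]=[\beta(\mathcal{A}_x):\mathcal{A}_{y_1}]$. This is the same principle that ended the proof of the $x$--independence above. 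Expanding numerator and denominator of $\ind(\alpha\beta)^2$ via multiplicativity and then regrouping the resulting four-term ratio gives exactly $\ind(\alpha)^2\,\ind(\beta)^2$, so taking positive square roots finishes the argument.

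The only caveat is ensuring finiteness of every index we manipulate before invoking multiplicativity. Since $\alpha\beta(\mathcal{A}_x)\subseteq \mathcal{A}_{x+l_\alpha+l_\beta}$, every subfactor in sight sits inside the chain $\mathcal{A}_{y_2}\subseteq\cdots\subseteq \mathcal{A}_{x+l_\alpha+l_\beta}$, and the finite index property together with multiplicativity bounds all the intermediate indices. I do not anticipate any genuine obstacle; this proposition is essentially a bookkeeping exercise once the parameters $(x,y_1,y_2)$ have been aligned so that a single chain of inclusions supports all three indices simultaneously.
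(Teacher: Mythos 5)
Your proof is correct and is essentially the same as the paper's: the paper picks $y\le y'\le x$ with $\mathcal{A}_y\subseteq\alpha(\mathcal{A}_{y'})$ and $\mathcal{A}_{y'}\subseteq\beta(\mathcal{A}_x)$ (your $y_2,y_1$), splits $\ind(\alpha\beta)^2$ via multiplicativity of the Jones index over the chain $\mathcal{A}_y\subseteq\alpha(\mathcal{A}_{y'})\subseteq\alpha\beta(\mathcal{A}_x)$, and uses $[\alpha\beta(\mathcal{A}_x):\alpha(\mathcal{A}_{y'})]=[\beta(\mathcal{A}_x):\mathcal{A}_{y'}]$, exactly as you propose.
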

\begin{proof}
    Choose integers $y\le y'\le x$ satisfying the following: 
    \begin{enumerate}
        \item $\mathcal{A}_y\subseteq \alpha(\mathcal{A}_{y'})$; 
        \item $\mathcal{A}_{y'}\subseteq \beta(\mathcal{A}_{x})$.
    \end{enumerate}
    Then we have
    \begin{align*}
        \ind(\alpha\circ\beta)^2&=\frac{[\alpha\circ\beta(\mathcal{A}_x):\mathcal{A}_{y}]}{[\mathcal{A}_x:\mathcal{A}_y]}=\frac{[\alpha\circ\beta(\mathcal{A}_x):\alpha(\mathcal{A}_{y'})][\alpha(\mathcal{A}_{y'}):\mathcal{A}_{y}]}{[\mathcal{A}_x:\mathcal{A}_{y'}][\mathcal{A}_{y'}:\mathcal{A}_y]}\\
        &=\frac{[\beta(\mathcal{A}_x):\mathcal{A}_{y'}][\alpha(\mathcal{A}_{y'}):\mathcal{A}_{y}]}{[\mathcal{A}_x:\mathcal{A}_{y'}][\mathcal{A}_{y'}:\mathcal{A}_y]}=\ind(\beta)^2\ind(\alpha)^2.\qedhere
    \end{align*}
\end{proof}

\begin{prop}
    Finite depth circuits are in $\ker(\ind)$.
\end{prop}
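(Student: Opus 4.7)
By the multiplicativity of the index established in the previous proposition, it suffices to show that every depth one circuit $\alpha$ has $\ind(\alpha)=1$. So let $\alpha$ be defined by a partition $\{I_i\}_{i\in \mathbbm{N}}$ of $\mathbbm{Z}$ with $\sup_i |I_i|=l<\infty$ and unitaries $u_i\in A_{I_i}$.

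Fix a cut point $x\in\mathbbm{Z}$. Since the $I_i$ are disjoint intervals, at most one of them, say $I_{i_0}$, can straddle $x$ in the sense of containing both $x$ and $x+1$; if no such interval exists, set $u_{i_0}=1$. Partition the remaining indices into $S_-=\{i\ne i_0:I_i\subseteq(-\infty,x]\}$ and $S_+=\{i\ne i_0:I_i\subseteq[x+1,\infty)\}$, and set $U_\pm=\prod_{i\in S_\pm}u_i$ (these infinite products are not convergent, but make sense as formal symbols that act locally). For any $a\in \bigcup_I A_I$ localized in $(-\infty,x]$, each $u_i$ with $i\in S_+$ commutes with $a$ by locality, so $\alpha(a)=U_-u_{i_0}au_{i_0}^*U_-^*$. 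Since $U_-$ lies in $\mathcal{A}_x$, conjugation by $U_-$ fixes $\mathcal{A}_x$ setwise, and by WOT-continuity of the normal extension of $\alpha$ to $\mathcal{A}$ we conclude that
\[
\alpha(\mathcal{A}_x)=u_{i_0}\mathcal{A}_x u_{i_0}^*.
\]

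Next, pick $y\le x-l$. The unitary $u_{i_0}$ is supported in an interval of length at most $l$ meeting $(-\infty,x]$, hence contained in $[x-l+1,x+l]$, so $u_{i_0}$ commutes with $\mathcal{A}_y$ by locality. In particular, $\mathcal{A}_y=u_{i_0}\mathcal{A}_y u_{i_0}^*\subseteq u_{i_0}\mathcal{A}_x u_{i_0}^*=\alpha(\mathcal{A}_x)$, so $y$ is admissible in Definition \ref{indef}. Since conjugation by $u_{i_0}$ is a spatial isomorphism of factor inclusions, the multiplicativity/invariance of the Jones index under unitary conjugation gives
\[
[\alpha(\mathcal{A}_x):\mathcal{A}_y]=[u_{i_0}\mathcal{A}_x u_{i_0}^*:u_{i_0}\mathcal{A}_y u_{i_0}^*]=[\mathcal{A}_x:\mathcal{A}_y].
\]
Substituting into the definition, $\ind(\alpha)^2=[\alpha(\mathcal{A}_x):\mathcal{A}_y]/[\mathcal{A}_x:\mathcal{A}_y]=1$, so $\ind(\alpha)=1$, and the previous proposition then gives $\ind\equiv 1$ on all of $\FDQC(A)$.

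The main technical point to be careful about is the manipulation of the formal infinite product $\prod_i u_i$: only its local action is meaningful, so the identity $\alpha(\mathcal{A}_x)=u_{i_0}\mathcal{A}_x u_{i_0}^*$ should be established first on the dense local subalgebra supported in $(-\infty,x]$ and then extended to the weak closure via the normal extension of $\alpha$. Everything else reduces to the invariance of the Jones index under inner automorphisms, which follows immediately from its spatial definition.
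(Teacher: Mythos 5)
Your proof is correct. The key difference from the paper's argument is in the handling of the partition interval that straddles the cut point $x$. The paper simply \emph{chooses} $x$ to be the right endpoint of some interval $I$ in the partition; with that choice no interval straddles $x$, the unitaries on the left lie in $\mathcal{A}_x$, the unitaries on the right commute with $\mathcal{A}_x$, and one immediately gets the equality $\alpha(\mathcal{A}_x)=\mathcal{A}_x$, from which $\ind(\alpha)=1$ for any $y\le x$. You instead work at an arbitrary cut point, isolate the one unitary $u_{i_0}$ supported on a straddling interval, show $\alpha(\mathcal{A}_x)=u_{i_0}\mathcal{A}_x u_{i_0}^*$, and then choose $y\le x-l$ far enough away that $u_{i_0}$ commutes with $\mathcal{A}_y$, so invariance of the Jones index under unitary conjugation gives $[\alpha(\mathcal{A}_x):\mathcal{A}_y]=[\mathcal{A}_x:\mathcal{A}_y]$. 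Your version is slightly longer but has the minor virtue of not requiring a judicious choice of $x$, and it is more explicit about the formal status of the infinite product $\prod_i u_i$ and the WOT extension to the von Neumann completion, a point the paper glosses over. Both arguments rest on the same two ingredients (multiplicativity of $\ind$ to reduce to depth one, and the fact that conjugation by a single local unitary does not change the Jones index), so the approaches are essentially the same in spirit.
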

\begin{proof}
    It suffices to show that the depth $1$ circuits are in $\ker(\ind)$. Let $\{I\}$ be a collection of intervals that partitions $\mathbbm{Z}$ and $\sup_I|I|<\infty$. Let $\alpha=\Ad(U)$ where $U=\prod_I U_I$ for some unitaries $U_I$ in $\mathcal{A}_I$. Choose an interval $I$ in the partition, and let $x=\sup I$. Then we have $\alpha(\mathcal{A}_x)=\mathcal{A}_x$. Therefore, for any $y\le x$
    \begin{align*}
        \ind(\alpha)^2&=\frac{[\alpha(\mathcal{A}_x):\mathcal{A}_{y}]}{[\mathcal{A}_x:\mathcal{A}_y]}=\frac{[\mathcal{A}_x:\mathcal{A}_{y}]}{[\mathcal{A}_x:\mathcal{A}_y]}=1.\qedhere
    \end{align*}
\end{proof}

\begin{remark} Our treatment of index has been somewhat asymmetric.  We have only assumed that the left infinite algebras are factors, and thus our index only makes sense a-priori with the ``left infinite" algebras. Alternatively, we could consider a (right) finite index property, and define the subfactors $\mathcal{A}^{+}_{x}:=(\bigcup_{I\ge x} A_{I})^{\prime \prime}\subseteq B(L^{2}(A,\tr))$. In this case we could define a ``right" version of the index as follows. For $ x\le z$  with $\mathcal{A}^{+}_{z}\subseteq\alpha(\mathcal{A}^{+}_{x})$, set
$$
\ind(\alpha)=\left(\frac{[\mathcal{A}_x^+:\mathcal{A}_z^+]}{[\alpha(\mathcal{A}_x^+):\mathcal{A}_z^+]}\right)^{1/2}.
$$
In general, this may give different information than the left index (in the case when both indices make sense!). However, in the case of the fusion spin chains, with some work one can show this alternative index is equal to the index defined above. Since we make no use of the ``right" version, we do not include the argument since it would take us too far afield.

\end{remark}

%%%%%%%%%%%%%%%%%%%%%%%%%%%%%%%%%%%

\begin{thm}
If $X\ncong \mathbbm{1}$ is strongly tensor generating, then the net $A(\mathcal{C}, X)$ has a unique tracial state and satisfies the finite index property. Thus, we have a canonical homomorphism

$$\ind: \QCA(A(\mathcal{C},X))/\FDQC(A(\mathcal{C},X))\rightarrow \mathbbm{R}^{\times}_{+}.$$
\end{thm}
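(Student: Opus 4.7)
The plan is to verify that $A:=A(\mathcal{C},X)$ has a unique tracial state and satisfies the finite index property; Theorem~\ref{indexhom} will then yield the homomorphism. Both verifications rest on a Perron--Frobenius analysis of the fusion matrix $F$ of $X$, whose $(Y,Z)$-entry is $N^{Z}_{Y,X}=\dim\mathcal{C}(Z,Y\otimes X)$: strong tensor generation makes $F$ indecomposable, and self-duality of $X$ together with Frobenius reciprocity gives $N^{Z}_{Y,X}=N^{Y}_{Z,X}$, so $F$ is symmetric.

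For the unique trace, I would identify the inclusion $A_{[1,n]}\hookrightarrow A_{[1,n+1]}$, $w\mapsto w\otimes 1_X$, as having inclusion matrix equal to a truncation of $F$. Strong tensor generation ensures that every simple appears in $X^{\otimes n}$ for $n$ sufficiently large, so the inclusion matrices stabilize to $F$ itself and the hypotheses of the Perron--Frobenius criterion sketched in the preamble (namely $T_n=T_{n-1}^{t}$ and $T_n$ indecomposable) are satisfied. The resulting unique tracial state has value $d_Y/d_X^{n}$ on any minimal projection in the $Y$-summand of $A_{[1,n]}$.

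For the finite index property, I would first apply the same PF argument to the left-infinite AF algebra $A_{(-\infty,x]}:=\bigcup_{a\le x}A_{[a,x]}$ --- whose consecutive inclusions (now tensoring $1_X$ on the left) have inclusion matrices of the same form and hence symmetric and indecomposable --- to obtain a unique tracial state on $A_{(-\infty,x]}$. By the argument from the preamble, $\mathcal{A}_x=A_{(-\infty,x]}''$ is therefore an infinite-dimensional $\rm{II}_{1}$ factor. For finiteness of $[\mathcal{A}_z:\mathcal{A}_x]$ when $x<z$, I would use that each finite-dimensional inclusion $A_{[a,x]}\subseteq A_{[a,z]}$ is a composition of $z-x$ one-step right-inclusions and hence has inclusion matrix $F^{z-x}$; by the classical computation of Jones index for inclusions of multi-matrix algebras with the Markov trace, this index equals $\|F^{z-x}\|^{2}=d_X^{2(z-x)}$, where $d_X$ is the Perron--Frobenius eigenvalue of the symmetric matrix $F$. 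The trace-preserving conditional expectations $E_a:A_{[a,z]}\to A_{[a,x]}$ thus satisfy the uniform Pimsner--Popa lower bound $E_a(y^{*}y)\ge d_X^{-2(z-x)}\,y^{*}y$; since they all arise from the unique trace on $A$ they are compatible on the dense subalgebra $A_{(-\infty,z]}$ and extend to a trace-preserving expectation $E:\mathcal{A}_z\to\mathcal{A}_x$ with the same bound, giving $[\mathcal{A}_z:\mathcal{A}_x]\le d_X^{2(z-x)}<\infty$.

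I expect the main technical obstacle to be this final limiting step --- verifying carefully that the $E_a$ assemble into a well-defined bounded expectation on $\mathcal{A}_z$ and that the uniform Pimsner--Popa constant survives the passage to the weak closure. Once this is in hand, Theorem~\ref{indexhom} immediately supplies the homomorphism $\ind:\QCA(A(\mathcal{C},X))/\FDQC(A(\mathcal{C},X))\to\mathbbm{R}^{\times}_{+}$.
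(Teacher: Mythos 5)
Your approach is essentially the same as the paper's: both rest on the Perron--Frobenius criterion for stationary Bratteli diagrams. The paper gets there slightly more cleanly by coarse-graining: it chooses $n$ so that every simple object of $\mathcal{C}$ occurs in $X^{\otimes n}$ and then works with the two-sided tower $A_k := A_{(-kn,(k+1)n]}$, whose inclusion matrix (the fusion graph of $X^{\otimes n}\otimes(\cdot)\otimes X^{\otimes n}$) is genuinely stationary and symmetric from the very first step, avoiding the ``truncation then stabilization'' bookkeeping you invoke.

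There is one slip worth flagging. You apply the preamble's Perron--Frobenius criterion to the tower $A_{[1,n]}\subseteq A_{[1,n+1]}\subseteq\cdots$, but the union of this tower is only the right-half algebra $A_{(0,\infty)}$, not the full quasi-local algebra $A$; by itself this only gives uniqueness of the trace on $A_{(0,\infty)}$. You should either use a two-sided exhausting tower as the paper does, or reorganize: you already prove (for the factoriality step) that every left-infinite algebra $A_{(-\infty,x]}$ has a unique trace, and since $\bigcup_{x}A_{(-\infty,x]}$ is dense in $A$, any two traces on $A$ agreeing on each $A_{(-\infty,x]}$ must coincide. Either fix is short.

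Finally, you add explicit content where the paper is terse. The paper's proof only verifies unique trace and factoriality and says this ``suffices,'' relegating the finiteness of $[\mathcal{A}_z:\mathcal{A}_x]$ to Remark \ref{subfactor_perspective}, where the tower is identified with the Jones tower of the standard model for a finite-depth subfactor (hence finite index by construction). Your Pimsner--Popa estimate is a legitimate alternative route: the inclusion matrix of $A_{[a,x]}\subseteq A_{[a,z]}$ is (for $a$ sufficiently negative) $F^{z-x}$ with $\|F^{z-x}\|^2 = d_X^{2(z-x)}$, and the trace-preserving conditional expectations are compatible precisely because the relevant squares of local algebras are commuting squares with respect to the Markov trace --- exactly the commuting-square structure the paper records in Remark \ref{subfactor_perspective}. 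This is the one piece you flag as a potential obstacle, and indeed it is the crux; but once the commuting-square fact is invoked, the uniform Pimsner--Popa bound passes to the weak closure by a standard density argument, and the argument closes.
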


\begin{proof}
It suffices to show that the quasi-local algebra has a unique tracial state, and that the left infinite quasi-local algebras are factors in the GNS completion of the global tracial state. 
Choose some $n$ so that $X^{\otimes n}$ contains all isomorphism classes of simple objects in $\mathcal{C}$. 
Set $A_{k}:=A_{(-kn,(k+1)n]}$. Then we can view the quasi-local algebra as the AF-C*-algebra with finite dimensional tower
$$
A_{0}\subseteq A_{1}\subseteq \dots.
$$
The Bratelli diagram (ignoring multiplicities) has at each level vertices indexed by the simple objects of $\mathcal{C}$, and the adjacency matrix given between levels by the fusion graph of $X^{\otimes n} \otimes (\cdot) \otimes X^{\otimes n}$. Thus, the Bratteli diagram is stationary, so there is a unique tracial state given by the Perron-Frobenius eigenvector \cite[Chapter 6]{MR623762}.

Similar reasoning shows that the left algebras $A_{\le x}$ have representations by stationary Bratteli diagrams, hence also have unique tracial state. In particular, the restriction of the global tracial state to this one is unique, and thus, the bicommutant completion in the GNS representation is a $\rm{II}_{1}$ factor (we use $X\ne \mathbbm{1}$ here to guarantee the factor is infinite dimensional).
\end{proof}

\begin{remark}\label{subfactor_perspective}
We can now explain the subfactor perspective on fusion spin chains. Let $A=A(\mathcal{C},X)$ with $X$ self-dual and strongly tensor generating, and let $\mathcal{A}_{x}$ denote the $\rm{II}_{1}$ factors as described above.

Pick any $x$. Then $\mathcal{A}_{x}\subseteq \mathcal{A}_{x+1}$ is a finite index, finite depth subfactor. Indeed, this is the standard model \cite{MR1055708, MR1642584} for the finite depth subfactor with standard invariant

$$
\begin{tikzcd}
 \mathbbm{C}\arrow[r, phantom, "\subseteq"] &\End(X) \arrow[r, phantom, "\subseteq"]& \End(X\otimes \overline{X})\arrow[r, phantom, "\subseteq"] & \End(X\otimes \overline{X}\otimes X)\arrow[r, phantom, "\subseteq"]& \dots
\\
 & \mathbbm{C}\arrow[u, sloped, phantom, "\subseteq"]\arrow[r, phantom, "\subseteq"] &\End(\overline{X})\arrow[r, phantom, "\subseteq"] \arrow[u, sloped, phantom, "\subseteq"] & \End(\overline{X}\otimes X)\arrow[r, phantom, "\subseteq"] \arrow[u, sloped, phantom, "\subseteq"]& \dots
\end{tikzcd}
$$

Since $\overline{X}\cong X$, each of these algebras is isomorphic to $\End(X^{\otimes n})$ in $\mathcal{C}$ for various $n$. Furthermore, $\mathcal{A}_{x}\subseteq \mathcal{A}_{x+1}\subseteq \mathcal{A}_{x+2}$ is an instance of the basic construction, and thus, we view  the entire tower $\dots\subseteq \mathcal{A}_{x-1}\subseteq \mathcal{A}_{x}\subseteq \mathcal{A}_{x+1}\subseteq\cdots$ as the Jones tower of the standard model for the subfactor with the above invariant. 

In particular, this allows us to identify $\End(X^{\otimes z-x})\cong A_{(x,z]}=\mathcal{A}_{x}^{\prime}\cap \mathcal{A}_{z}$ for $x<z$.
This fact is well-known to experts, so we only give a sketch here.
Choose a positive integer $n$ such that $X^{\otimes n}$ contains all simple objects in $\mathcal{C}$ as direct summands. Then for all $k\ge n-1$
$$
\begin{tikzcd}
    A_{[x-k,z]}\arrow[r,symbol=\subset]&A_{[x-k-1,z]}\\
    A_{[x-k,x]}\arrow[r,symbol=\subset]\arrow[u,symbol=\subset]&A_{[x-k-1,z]}\arrow[u,symbol=\subset]
\end{tikzcd}
$$
is a commuting square, and 
$$
\begin{tikzcd}
    A_{[x-k,z]}\arrow[r,symbol=\subset]&A_{[x-k-1,z]}\arrow[r,symbol=\subset]&A_{[x-k-2,z]}\\
    A_{[x-k,x]}\arrow[r,symbol=\subset]\arrow[u,symbol=\subset]&A_{[x-k-1,z]}\arrow[u,symbol=\subset]\arrow[r,symbol=\subset]&A_{[x-k-2,z]}\arrow[u,symbol=\subset]
\end{tikzcd}
$$
is isomorphic to the basic construction of commuting square. Recall that $\mathcal{A}_x=(\bigcup_{k=n-1}^\infty A_{[x-k,x]})''$ and $\mathcal{A}_z=(\bigcup_{k=n-1}^\infty A_{[x-k,z]})''$. Thus, by the Ocneanu compactness, 
$\mathcal{A}_x'\cap \mathcal{A}_z=A_{[x-n,x]}'\cap A_{[x-n+1,z]}$. (See for instance, \cite{MR1473221}.) It is easy to see that $A_{(x,z]}\subset A_{[x-n,x]}'\cap A_{[x-n+1,z]}$. To show the reverse inclusion, let $w\in A_{[x-n,x]}'\cap A_{[x-n+1,z]}$ and $e_{x-n+1}=\frac{1}{\dim(X)}\coev_{X}\circ \ev_X\in A_{[x-n,x-n+1]}$ be the Jones projection. Note that $X$ is self-dual and we assume $\ev_X=\coev_X^*$.
Since $e_{x-n+1}\in A_{[x-n,x-n+1]}\subset A_{[x-n,x]}$ and $w\in A_{[x-n,x]}'$, they must commute. 
Therefore,
\begin{align*}
    w&=[1_X\otimes (\ev_X\circ e_{x-n+1})\otimes 1_X^{\otimes z-x+n-1}]\circ(\coev_X\otimes w)\\
    &=(1_X\otimes \ev_X\otimes 1_X^{\otimes z-x+n-1})\circ(1_X^{\otimes 2}\otimes w)\circ(1_X\otimes e_{x-n+1}\otimes 1_X^{z-x+n-1})\circ(\coev_X\otimes 1_X^{z-x+n})\\
    &=\frac{1}{\dim(X)}1_X\otimes [(\ev_X\otimes 1_X^{\otimes z-x+n-1})\circ(1_X\otimes w)\circ(\coev_X\otimes 1_X^{z-x+n-1})\\
    &\in A_{[x-n+2,z]}.
\end{align*}
Thus, $w\in A_{[x-n+1,x]}'\cap A_{[x-n+2,z]}$. By induction, we obtain that $w\in A_{(x,z]}$.

\end{remark}

\section{Generalized translations on fusion spin chains}

In this section, we consider a class of examples of QCA called \textit{generalized translations}. Consider a fusion spin chain $A(\mathcal{C}, X)$, which in this subsection we denote for $A$ for short. Suppose in addition we have the following data:

\begin{enumerate}
\item 
A fusion category $\mathcal{D}$ and a full inclusion $\mathcal{C}\subseteq \mathcal{D}$.
\item
A factorization $X^{\otimes n}\cong Y\otimes Z$ for some fixed $n$ and $Y,Z\in \mathcal{D}$
 \item 
An isomorphism $\sigma:Y\otimes Z\cong Z\otimes Y$ in $\mathcal{D}$.
\end{enumerate}

We define a QCA $\alpha$ on $A$ as follows. First, partition $\mathbbm{Z}$ into intervals of length $n$. Then consider intervals $I$ which are unions of intervals in the partition. We call these \textit{coarse-grained} intervals. Then for coarse-grained intervals $I$ consisting of $m$ intervals of length $n$,
$$
A_{I}\cong \text{End}_{\mathcal{C}}((X^{\otimes n})^{\otimes m})\cong \text{End}_{\mathcal{D}}((Y\otimes Z)^{\otimes m}).
$$

Recall that for any coarse-grained interval $I$ with $m$ length $n$ intervals, then $I^{+n}$ is again a coarse-grained interval consisting of $m+2$ intervals of length $n$. Then for any coarse-grained interval $I$ and $w\in A_{I}$, we define
$$ 
\alpha(w)=1_{Y\otimes Z}\otimes 1_{Y}\otimes [\sigma^{\otimes m}\circ w\circ (\sigma^{-1})^{\otimes m}]\otimes 1_{Z}\in A_{I^{+n}}
$$
\noindent where $\sigma^{\otimes m}: (Y\otimes Z)^{m}\cong (Z\otimes Y)^{m}$ is simply the $m$-fold tensor product $\sigma \otimes \dots \otimes \sigma$.
This clearly extends to a C*-homomorphism on the quasi-local algebra. Its inverse is given, for $w\in A_{I}$ with $I$ coarse-grained as 
$$ 
\alpha^{-1}(w)= [(\sigma^{-1})^{\otimes m+1}\circ (1_{Z}\otimes w\otimes 1_{Y})\circ \sigma^{\otimes m+1}]\otimes  1_{Y\otimes Z}\in A_{I^{+n}}.
$$

It is easy to see that $\alpha$ and $\alpha^{-1}$ are QCA, which we call \textit{generalized translations}. The reason for this terminology is choosing $\mathcal{D}=\mathcal{C}$, $n=1$ and $X\cong X\otimes 1$ the coherence unitor, then the resulting QCA $\alpha$ is simply translation to the right by one site.
In many cases of interest (including concrete spin systems), generalized translations represent \textit{all possible} QCA up to finite depth circuits, and the index can be used to show this. We have the following computation of the index for these QCA.

\begin{prop}
    Let $\alpha$ be a generalized translation constructed as above from a factorization $X^{\otimes n}\cong Y\otimes Z$. Then $\ind(\alpha)=\dim(Y)$.
\end{prop}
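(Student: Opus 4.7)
My plan is to take $x=0$ at a coarse-grained block boundary (so that the rightmost block of the partition lying to the left of $0$ is $(-n,0]$) and then apply Definition \ref{indef} with $y = x = 0$ once we verify $\mathcal{A}_0 \subseteq \alpha(\mathcal{A}_0)$; the index then simplifies to $\ind(\alpha)^2 = [\alpha(\mathcal{A}_0) : \mathcal{A}_0]$. The core idea is to identify $\alpha(\mathcal{A}_0)$ as a specific intermediate subfactor between $\mathcal{A}_0$ and $\mathcal{A}_n$ that isolates the ``$Y$-half'' of the block $(0,n]$.

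For each $k \ge 0$, define
\begin{align*}
B_k := \End_{\mathcal{D}}((Y \otimes Z)^{\otimes k} \otimes Y) \otimes 1_Z \;\subseteq\; \End_{\mathcal{D}}((Y \otimes Z)^{\otimes (k+1)}) = A_{(-kn, n]},
\end{align*}
where $\otimes 1_Z$ is the identity on the $Z$-half of the block $(0,n]$ under the factorization $X^{\otimes n} \cong Y \otimes Z$, and set $\mathcal{N} := (\bigcup_k B_k)''$. Since $A_{(-kn, 0]} \subseteq B_k$ by right-padding with $1_Y$, we have $\mathcal{A}_0 \subseteq \mathcal{N}$; and the tower $(B_k)$ is stationary with indecomposable Bratteli adjacency (the fusion matrix of $\otimes(Y \otimes Z)$), so the stationarity argument used in the preceding theorem to show that each $\mathcal{A}_x$ is a factor applies equally well to show $\mathcal{N}$ is a $\mathrm{II}_1$ factor.

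The central claim is $\alpha(\mathcal{A}_0) = \mathcal{N}$. Unpacking the defining formula of $\alpha$: for $w \in A_{(-kn, 0]} = \End_{\mathcal{D}}((Y \otimes Z)^{\otimes k})$, conjugation by $\sigma^{\otimes k}$ carries $w$ into $\End_{\mathcal{D}}((Z \otimes Y)^{\otimes k})$, so
\begin{align*}
\alpha(A_{(-kn, 0]}) = 1_{Y \otimes Z} \otimes 1_Y \otimes \End_{\mathcal{D}}((Z \otimes Y)^{\otimes k}) \otimes 1_Z \;\subseteq\; A_{(-(k+1)n, n]}.
\end{align*}
Using the strictness identifications $Y \otimes (Z \otimes Y)^{\otimes k} = (Y \otimes Z)^{\otimes k} \otimes Y$ and $Z \otimes (Y \otimes Z)^{\otimes k} \otimes Y = (Z \otimes Y)^{\otimes (k+1)}$, direct comparison of these identity-padded endomorphism subalgebras (after stripping common $1_{Y \otimes Z}$ and $1_Z$ pads) yields the sandwich
\begin{align*}
\alpha(A_{(-kn, 0]}) \;\subseteq\; B_k \;\subseteq\; \alpha(A_{(-(k+1)n, 0]}),
\end{align*}
all embedded inside $A_{(-(k+2)n, n]}$. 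Taking unions over $k$ gives $\bigcup_k \alpha(A_{(-kn, 0]}) = \bigcup_k B_k$, and since $\alpha$ extends uniquely to a normal automorphism of the bicommutant, passing to double commutants gives $\alpha(\mathcal{A}_0) = \mathcal{N}$. In particular $\mathcal{A}_0 \subseteq \alpha(\mathcal{A}_0)$, validating the choice $y = 0$ in Definition \ref{indef}.

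It then remains to compute $[\mathcal{N} : \mathcal{A}_0]$. The finite-dimensional inclusions $A_{(-kn, 0]} \subseteq B_k$ are just the $\otimes Y$ inclusions $\End_{\mathcal{D}}((Y \otimes Z)^{\otimes k}) \hookrightarrow \End_{\mathcal{D}}((Y \otimes Z)^{\otimes k} \otimes Y)$, and together with the left-expansion inclusions they form a stationary sequence of commuting squares whose Bratteli adjacency is the fusion matrix $N_Y$ of $\otimes Y$ on the simples of $\mathcal{D}$. Standard AF-subfactor theory (as invoked in Remark \ref{subfactor_perspective}) identifies the limiting index $[\mathcal{N} : \mathcal{A}_0]$ with the Frobenius--Perron squared norm $\|N_Y\|^2 = \dim(Y)^2$, so $\ind(\alpha) = \dim(Y)$. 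The principal obstacle I anticipate is the sandwich of inclusions above, which requires careful tracking of how $\sigma$ reshuffles half-blocks and how the twisted subalgebra $1 \otimes \End_{\mathcal{D}}((Z \otimes Y)^{\otimes k}) \otimes 1$ sits inside the ambient endomorphism algebra; once that bookkeeping is in hand, the index computation is routine.
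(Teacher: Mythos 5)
Your proof is correct and follows the same route as the paper's: identify $\alpha(\mathcal{A}_0)$ with the ``half-block'' factor generated by the $\End_{\mathcal{D}}\big((Y\otimes Z)^{\otimes k}\otimes Y\big)\otimes 1_Z$, and then observe that the subfactor $\mathcal{A}_0\subseteq\alpha(\mathcal{A}_0)$ has Jones index $\dim(Y)^2$. The paper simply asserts both facts as easy, whereas you supply the sandwich argument for the identification and the commuting-square/Perron--Frobenius justification for the index, so this is a more detailed version of the same proof rather than a different one.
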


\begin{proof}
Let $x$ be an endpoint of a coarse-grained interval. Then $\mathcal{A}_{x}\subseteq \alpha(\mathcal{A}_{x})$. It is easy to see that $\alpha(\mathcal{A}_{x})$ is precisely the subalgebra of operators in $\mathcal{A}_{x+n}$ which can be written $w\otimes 1_{Z}$. The resulting subfactor $\mathcal{A}_{x}\subseteq \alpha(\mathcal{A}_{x})$ has Jones index $\dim(Y)^{2}$, and thus $\ind(\alpha)=\dim(Y)$.
\end{proof}

\subsection{Concrete spin chains}

Now let $\mathcal{C}=\textbf{Hilb}_{f.d.}$, the category of finite dimensional Hilbert spaces. If we pick $X:=\mathbbm{C}^{d}$, then we obtain the usual definition of a spin chain of qudits. This is the case of QCA studied in \cite{MR2890305}. We can now see that our index agrees with the index defined in \cite{MR2890305}, which we call the \textit{GNVW index}.

\begin{prop}
The definition of index above agrees with the GNVW index for concrete spin chains.
\end{prop}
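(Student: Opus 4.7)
The plan is to reduce, via coarse-graining and finite-depth circuits, to a canonical ``partial shift'' form on the qudit chain, where both indices admit a direct computation.

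The first step is to reduce to spread one. Coarse-graining consecutive sites into blocks of length $n$ just relabels the tower $\cdots \subseteq \mathcal{A}_{x-1} \subseteq \mathcal{A}_x \subseteq \mathcal{A}_{x+1} \subseteq \cdots$, so neither the GNVW index nor $\ind$ is affected. After coarse-graining I may therefore assume that $\alpha$ has spread at most one and that the on-site algebra is $M_D$ for some $D$, so that $[\mathcal{A}_{x+1}:\mathcal{A}_x] = D^2$.

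The second step is to put $\alpha$ into a canonical form. Both indices are invariant under composition by finite-depth circuits (for $\ind$ by the proposition above; for the GNVW index by \cite{MR2890305}). By the GNVW structure theorem \cite{MR2890305}, after possibly one further round of coarse-graining I may assume that $M_D$ factors as $M_r \otimes M_s$ with $D = rs$, and that $\alpha$ is the canonical ``partial shift'' sending $M_r$ at site $k$ to $M_r$ at site $k+1$ and $M_s$ at site $k$ to $M_s$ at site $k-1$. By construction, the GNVW index of this canonical partial shift equals $r/s$.

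The third step is a direct computation of $\ind(\alpha)$ for the canonical partial shift. Tracking the action on each tensor factor,
$$
\alpha(\mathcal{A}_x) \;=\; \bigotimes_{k \le x} \bigl(M_r^{\{k+1\}} \otimes M_s^{\{k-1\}}\bigr) \;=\; \mathcal{A}_{x-1} \otimes M_r^{\{x\}} \otimes M_r^{\{x+1\}},
$$
because the right-moving $M_r$ factors fill in $A_{\{j\}}$ for all $j \le x+1$, while the left-moving $M_s$ factors fill in $A_{\{j\}}$ only for $j \le x-1$. Taking $y = x-1$ in Definition~\ref{indef},
$$
\ind(\alpha) \;=\; \sqrt{\frac{[\alpha(\mathcal{A}_x):\mathcal{A}_{x-1}]}{[\mathcal{A}_x:\mathcal{A}_{x-1}]}} \;=\; \sqrt{\frac{r^{4}}{(rs)^{2}}} \;=\; \frac{r}{s},
$$
which matches the GNVW index of the canonical partial shift. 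The main obstacle is the appeal to the GNVW structure theorem in the second step; this is a nontrivial result, but it is established in the original paper and can be invoked as a black box.
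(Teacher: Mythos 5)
Your third step — the computation of $\ind$ on a bidirectional partial shift — is correct: tracking tensor factors gives $\alpha(\mathcal{A}_x) = \mathcal{A}_{x-1}\otimes M_r^{\{x\}}\otimes M_r^{\{x+1\}}$, and $\ind(\alpha) = \sqrt{r^4/(rs)^2} = r/s$. However, step two contains a genuine gap: the canonical form you invoke does not exist for a general QCA on a fixed qudit chain. A bidirectional partial shift requires the coarse-grained site algebra $M_D$ (with $D = d^m$ for some $m$) to factor as $M_r\otimes M_s$, so $rs = d^m$. Writing $d = \prod p_i^{e_i}$, $r = \prod p_i^{a_i}$, $s = \prod p_i^{m e_i - a_i}$, the index is $r/s = \prod p_i^{2a_i - m e_i}$, and every exponent $2a_i - m e_i$ has the same parity as $m e_i$. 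Consequently the indices realized by bidirectional shifts — and by arbitrary products of them — form the proper subgroup of $\mathbbm{Z}[1/d]_+^\times$ in which all prime exponents share a common parity. For $d = 6$ this subgroup misses $2 = 2^1 3^0$, so the QCA that translates the $M_2$ factor one step right (leaving $M_3$ fixed) is not FDQC-equivalent to any bidirectional partial shift after any amount of coarse-graining, and the structure theorem cannot say that it is. (For prime-power $d$ the parity constraint can always be met by choosing $m$, which may be why the obstruction is easy to miss.)

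The paper sidesteps this by arguing on a generating set rather than a single canonical form: from the GNVW isomorphism $\QCA(A_d)/\FDQC(A_d)\cong \mathbbm{Z}[1/d]_+^\times$, that group is generated by cosets of \emph{one-directional} generalized translations built from factorizations $(\mathbbm{C}^d)^{\otimes k}\cong \mathbbm{C}^p\otimes\mathbbm{C}^q$ (shift $M_p$ right by one $k$-block, leave $M_q$ fixed), whose $\ind$ is $p$ by the generalized-translation computation and whose GNVW index is also $p$; since both are homomorphisms killing $\FDQC$, agreement on generators gives agreement everywhere. Your argument would go through if you replaced the bidirectional canonical form with either (i) these one-directional shifts plus the ``agree on generators, hence agree'' step, or (ii) a composition of independent one-directional shifts, one per prime factor of $d$. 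As written, though, the appeal to a single bidirectional normal form is where the proof breaks.
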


\begin{proof}
Let us denote the qudit system $A_{d}$.  The GNVW index gives an isomorphism 
$$
\QCA(A_{d})/\FDQC(A_{d})\rightarrow \mathbbm{Z}[\frac{1}{d}]_+^{\times},
$$
where $\mathbbm{Z}[\frac{1}{d}]_+^{\times}$ denotes the group of positive units of the ring inside $\mathbbm{Q}$ generated by $\mathbbm{Z}$ and $\frac{1}{d}$. In particular, this shows that $\QCA(A_{d})/\FDQC(A_{d})$ is generated as a group by (the cosets of) generalized translations, built from a factorization of integers $(\mathbbm{C}^d)^{\otimes k}\cong \mathbbm{C}^{p}\otimes \mathbbm{C}^{q}$ and using the symmetric ``swap" braiding $\sigma$ from $\text{Hilb}_{f.d.}$ for the isomorphism.

%Generalized translations are defined as follows. Pick some factorization $d^{k}=pq$ for some $k>0$. Then partition $\mathbbm{Z}$ into intervals of length $k$, which we order left to right $\dots< I_{i}<I_{i+1}<\dots $. For each $I_{i}$ in the partition, $A_{I_i}=M_{d^k}(\mathbbm{C})\cong M^{i}_{p}(\mathbbm{C})\otimes M^{i}_{q}(\mathbbm{C})$.
Let $I_i=(k(i-1),ki]$ and $x=x_{(1)}\otimes x_{(2)}\in A_{I_i}\cong M_{p}^i(\mathbbm{C})\otimes M_{q}^i(\mathbbm{C})$, we define
$$
\alpha_{p}(x)=(1^{i}_{p}\otimes x_{(2)})\otimes (x_{(1)}\otimes 1^{i+1}_{q})\in A_{I_i}\otimes A_{I_{i+1}}.
$$
%This extends to a QCA on $A_{d}$. 

%Now, for any $\alpha\in \alpha_{p}\cdot \FDQC(A_{d})$, we can easily compute

\noindent By the above proposition, $\ind(\alpha)=p.$ This agrees with the GNVW index, and since the value of the homomorphism $\ind$ agrees with the GNVW index on a generating set, it must be precisely the same homomorphism.
\end{proof}

\subsection{Generalized Kramers-Wannier translations from $G$-graded extensions}

Now consider the case where 
$$
\mathcal{D}=\bigoplus_{g\in G} \mathcal{C}_{g}
$$
is a faithful $G$-graded extension of $\mathcal{C}$, so that $\mathcal{C}_{e}=\mathcal{C}$, each $\mathcal{C}_{g}$ is an invertible $\mathcal{C}$-bimodule, and $\mathcal{C}_{g}\otimes \mathcal{C}_{h}\subseteq \mathcal{C}_{gh}$. We will suppose that $G=\mathbbm{Z}/n\mathbbm{Z}$ with a generator $1$ and the identity $0$, and that we have an
object $Y\in \mathcal{C}_{1}$ such that $X:=Y^{\otimes n}\in \mathcal{C}=\mathcal{C}_{0}$
is a strong tensor generator for $\mathcal{C}$.
Then we have $X=Y\otimes Z$, where $Z=Y^{n-1}$, and we use the isomorphism $\text{id}_{Y^{n}}: Y\otimes Z\cong Z\otimes Y $,
% Do we want a braiding \sigma_{Y,Z} here? 
with $n=1$ in the above construction. 
If we think of the single $X$-sites as being in fact $n$-coarse-grained $Y$ sites, then this is simply the shift to the right by one $Y$ string.

We call a generalized translation of the above type a \textit{generalized Kramers-Wannier duality}, due to the example below.

\begin{remark}
$G$-graded extensions of a fusion category $\mathcal{C}$ are classified by morphisms of 3-groups $\psi: G\rightarrow \text{BrPic}(\mathcal{C})$. Dimensionally reducing and applying the ENO isomorphism (see \cite[Theorem 1.1]{MR2677836}) yields a morphism of 2-groups $\widetilde{\psi}: G\rightarrow \text{Aut}_{br}(\mathcal{Z}(\mathcal{C}))$. For a generalized Kramers-Wannier translation $\alpha$ built from a $\mathbbm{Z}/n\mathbbm{Z}$ extension of $\mathcal{C}$ built from the 3-group $\psi$, let $\xi=\widetilde{\psi}(1)\in \text{Aut}_{br}(\mathcal{Z}(\mathcal{C}))$. Then utilizing the equivalence $\textbf{DHR}(A(\mathcal{C},X))\cong \mathcal{Z}(\mathcal{C})$, it is straightforward to see by unpacking the definitions that $\textbf{DHR}(\alpha)=[\xi]$, where by $[\xi]$ we mean the  monoidal equivalence class of $\xi$.
\end{remark}

\bigskip

\begin{ex}\label{Kramers-Wannier}(Kramers-Wannier translations.) We consider a special case, which extends the original Kramers-Wannier duality. Let $B$ be an abelian group, and consider the unitary fusion category $\mathcal{C}:=\textbf{Hilb}(B)$ of finite dimensional $B$-graded Hilbert spaces. We let $X:=\bigoplus_{g\in B} \mathbbm{C}_{g}$. Then we have $X\cong \mathbbm{C}[B]$, the group algebra viewed as an $B$-graded Hilbert space. 

We consider the fusion spin chain $A(\mathcal{C},X)$. We will construct a QCA on this net using the above $G$-graded recipe.

Now, recall a (unitary Tambara-Yamagami) category with  abelian group of invertibles $B$ is characterized by a non-degenerate, symmetric, bicharacter $\chi$ on $B$ and a choice of sign $\epsilon\in \{-,+\}$ \cite{TAMBARA1998692}. Then $\mathcal{TY}(B,\chi,\epsilon)$ has (isomorphism classes of) simple objects $B\cup \{\rho\}$, with fusion rules 

$$b\otimes a\cong ba$$

\medskip

$$\rho\otimes \rho\cong\bigoplus_{b\in B} b$$

\medskip

$$b\otimes \rho\cong \rho\otimes b\cong \rho$$

\medskip

\noindent Note we have a full inclusion $\textbf{Hilb}(B)\subseteq \mathcal{TY}(B,\chi, \epsilon)$, and in fact $\mathcal{TY}(B,\chi, \epsilon)$ is a $\mathbbm{Z}/2\mathbbm{Z}$-graded extension of $\textbf{Hilb}(B)$. 

Utilizing the unique simple object $\rho\in \mathcal{TY}(B,\chi, \epsilon)$ that is not in the trivially graded component, we have $\rho^{\otimes 2}\cong X\in \textbf{Hilb}(B)$. In particular, we can apply the generalized translation associated to this factorization, and we obtain a $\alpha\in \QCA(A(\mathcal{C},X))$ with 

$$\ind(\alpha)=\sqrt{|B|}.$$

\noindent If we consider the case $B:=\mathbbm{Z}/2\mathbbm{Z}$, this implements a version of the famous Kramers-Wannier duality (for a detailed explanation of this see \cite[Section II.A]{PRXQuantum.4.020357}). We also note that having index values with square roots also appears, in a closely related context, in \cite{PhysRevB.99.085115}.

\end{ex}

\section{Index as an isomorphism for the $\textbf{Fib}$ chain}\label{golden}

%In this section, we consider, we consider the fusion categories $\text{sl}(2,p-2)^{\circ}$, for $p$ an odd prime. These are the subcategories $\text{sl}(2,p-2)$ consisting of integer spin These unitary fusion categories have a unitary

In this section, we consider the fusion spin chain constructed from the fusion category $\Fib$ with simple generating object $\tau$. Recall $\Fib$ is the rank $2$ unitary fusion category with simple objects $\mathbbm{1}$ and $\tau$, with fusion rule $\tau\otimes \tau\cong \mathbbm{1}\oplus \tau$. 

In this section, we denote by $A_\tau$  the fusion spin chain of algebras built from $\Fib$ with generating object $\tau$. Our goal in this section is to use the index to characterize the group $\QCA/\FDQC$ for $A_{\tau}$.

The algebra $A_\tau$ can be viewed either as the boundary algebra 
for a Levin-Wen model built from $\Fib$ (after coarse-graining nearest neighbors), or as the net of local operators on the golden anyon chain, invariant under the $\Fib$ MPO symmetry. From another perspective, this is the even part of the $A_{4}$ subfactor standard invariant, and in fact, the net we build can also be viewed as the two-sided tower of relative commutants in the tunnel of the Jones tower of the unique $A_{4}$ hyperfinite subfactor \cite{MR1642584,MR4272039}.

First, we examine possible values of the index. Note that $\mathcal{A}_{x}\subseteq \mathcal{A}_{x+1}\subseteq \mathcal{A}_{x+2}$ is a basic construction triple, and thus, $\mathcal{A}_{x}\subseteq \mathcal{A}_{x+k}$ is the iterated basic construction, and has index $\phi^{2k}$, where $\phi=\frac{1+\sqrt{5}}{2}$ is the golden ratio.

We recall that the unitary fusion category $\Fib$ is \textit{torsion-free} in the sense of \cite{MR3941472}: every indecomposable Q-system in $\Fib$ is a Morita trivial Q-system. A Q-system is \textit{Morita trivial} if and only if it can be written as $X\otimes \bar{X}$ with multiplication induced from evaluation, for some object $X\in \Fib$ \cite{MR4419534}.

\begin{defn} Let $\mathcal{D}$ be a (full, replete) tensor subcategory of $\text{Bim}(N)$, where $N$ is a $\rm{II}_{1}$ factor. Suppose that $N\subseteq M$ is a finite index subfactor with $L^{2}(M)\in \mathcal{D}$. Then we define the \textit{dual category} of $M$ with respect to $\mathcal{D}$, denoted $\mathcal{D}^{*}_{M}$, as the (full, replete) unitary subcategory of $\text{Bim}(M)$ which is the preimage of $\mathcal{D}$ under the restriction functor $\text{Bim}(M)\rightarrow \text{Bim}(N)$.
\end{defn}

Categorically, $\mathcal{D}^{*}_{M}$ is equivalent to the category of bimodules of the Q-system $_{N} L^{2}(M)_{N}$, internal to $\mathcal{D}$ (see \cite[Proposition 7.11.1]{MR3242743}).

Now, for any $x$ and any $z>x$, the even part of the subfactor $\mathcal{A}_{x}\subseteq \mathcal{A}_{z}$ generates a canonical copy of $\Fib$ inside $\text{Bim}(\mathcal{A}_{x})$ (the same copy of $\Fib$ for all $z$) \cite[Theorem 4.9]{MR1055708}. We call this $\Fib_{x}$.

\begin{lem}\label{FibIntermediate}
  Let $x<z$ and $\mathcal{A}_{x}\subsetneq P\subseteq \mathcal{A}_{z}$ an intermediate subfactor. Then $(\Fib_{x})^{*}_{P}\cong \Fib$, and the $P$-$P$ bimodule $_{P}L^{2}(\mathcal{A}_{z})_{P}\in (\Fib_{x})^{*}_{P}$.

\end{lem}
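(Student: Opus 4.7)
The plan is to translate the intermediate subfactor $\mathcal{A}_x \subsetneq P \subseteq \mathcal{A}_z$ into Q-system data internal to $\Fib_x$ and then exploit the torsion-freeness of $\Fib$. Since $\mathcal{A}_x \subseteq \mathcal{A}_z$ has finite index, so does the intermediate inclusion $\mathcal{A}_x \subseteq P$, and we let $Q := {}_{\mathcal{A}_x}L^2(P)_{\mathcal{A}_x}$ be the associated Q-system in $\text{Bim}(\mathcal{A}_x)$. Because $L^2(P)$ is an $\mathcal{A}_x$-$\mathcal{A}_x$ subbimodule of $L^2(\mathcal{A}_z) \in \Fib_x$, we have $Q \in \Fib_x$. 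Since $P$ is a factor, $Q$ is indecomposable as a Q-system; otherwise a nontrivial direct-sum decomposition of $Q$ would yield a nontrivial central projection of $P$.

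By the torsion-freeness of $\Fib$ recalled just before the statement, $Q$ is Morita trivial, so $Q \cong Y \otimes \bar{Y}$ for some $Y \in \Fib_x$ with multiplication induced by evaluation. By the identification cited in the paragraph preceding the lemma, namely \cite[Proposition 7.11.1]{MR3242743}, $(\Fib_x)^*_P$ is equivalent as a tensor category to $\text{Bim}_{\Fib_x}(Q)$, the category of $Q$-$Q$-bimodules internal to $\Fib_x$. Morita triviality of $Q$ gives that $\text{Mod}_{\Fib_x}(Q)$ is equivalent to the regular $\Fib_x$-module category, so its category of module endofunctors, $\text{Bim}_{\Fib_x}(Q)$, is equivalent to $\Fib_x \cong \Fib$ as a tensor category. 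This proves the first claim.

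For the second claim, restricting the $P$-$P$-bimodule ${}_P L^2(\mathcal{A}_z)_P$ along $\mathcal{A}_x \subseteq P$ on both sides yields ${}_{\mathcal{A}_x} L^2(\mathcal{A}_z)_{\mathcal{A}_x}$, which lies in $\Fib_x$ by the very definition of the latter (it is generated by such bimodules for $z>x$). Hence ${}_P L^2(\mathcal{A}_z)_P$ lies in the preimage of $\Fib_x$ under the restriction functor $\text{Bim}(P) \to \text{Bim}(\mathcal{A}_x)$, which is precisely $(\Fib_x)^*_P$.

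The main delicate point is the identification $\text{Bim}_{\Fib_x}(Q) \cong \Fib$: torsion-freeness of $\Fib$ is what forces the intermediate subfactor to correspond to a Morita-trivial Q-system, and once this step is in hand, the remainder of the argument is a routine unpacking of the Q-system/intermediate subfactor dictionary together with the standard fact that the bimodule category of any Morita-trivial Q-system in a fusion category recovers that fusion category.
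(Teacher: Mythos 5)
Your proposal is correct and follows essentially the same route as the paper: identify $L^{2}(P)$ as an indecomposable Q-system in $\Fib_{x}$, invoke torsion-freeness of $\Fib$ to conclude Morita triviality and hence $(\Fib_{x})^{*}_{P}\cong\Fib$, and then deduce membership of ${}_{P}L^{2}(\mathcal{A}_{z})_{P}$ in $(\Fib_{x})^{*}_{P}$ by restricting to $\mathcal{A}_{x}$. You merely spell out a few steps the paper leaves implicit (indecomposability from factoriality of $P$, and the unpacking of the dual-category identification via \cite[Proposition 7.11.1]{MR3242743}).
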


\begin{proof}
 Since $\mathcal{A}_{x}\subsetneq P\subseteq \mathcal{A}_{z}$, we have $L^{2}(P)\in \Fib_{x}$. Moreover, since $\Fib$ is torsion free, $L^{2}(P)$ is Morita trivial, which means $(\Fib_{x})^{*}_{P}\cong \Fib$. Furthermore, the $P$-$P$ bimodule $L^{2}(\mathcal{A}_{z})$ is in $\Fib_{x}$ as an $\mathcal{A}_{x}$-$\mathcal{A}_{x}$ bimodule by construction, hence $_{P}L^{2}(\mathcal{A}_{z})_{P}\in (\Fib_{x})^{*}_{P}$.
\end{proof}

\begin{thm}
The map $\text{Ind}: \QCA(A_\tau)\rightarrow \mathbbm{R}^{\times}_{+}$ surjects onto the subgroup $\{\phi^{n}\ :\ n\in \mathbbm{Z}\}\cong \mathbbm{Z}$.
\end{thm}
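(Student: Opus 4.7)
The plan is to exhibit a single QCA with $\ind$ equal to $\phi$; multiplicativity of $\ind$ then automatically realizes every $\phi^{n}$ with $n\in\mathbbm{Z}$.

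Concretely, I would realize the ordinary right translation $T$ on $A_{\tau}$ as an instance of the generalized translation construction of Section 4: take $\mathcal{D}=\mathcal{C}=\Fib$, $n=1$, $Y=\tau$, $Z=\mathbbm{1}$, and $\sigma$ the (strict) coherence isomorphism $\tau\otimes\mathbbm{1}\cong\mathbbm{1}\otimes\tau$. With $Z=\mathbbm{1}$ the defining formula
$$
\alpha(w)=1_{Y\otimes Z}\otimes 1_{Y}\otimes[\sigma^{\otimes m}\circ w\circ(\sigma^{-1})^{\otimes m}]\otimes 1_{Z}
$$
collapses to $\alpha(w)=1_{\tau}\otimes 1_{\tau}\otimes w\in A_{I^{+1}}$, which is exactly the shift by one site; in particular $T(\mathcal{A}_{x})=\mathcal{A}_{x+1}$ for every $x\in\mathbbm{Z}$.

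The index is then immediate from the general formula for generalized translations, giving $\ind(T)=\dim(Y)=\dim(\tau)=\phi$. As a direct check using Definition \ref{indef} with $y=x$:
$$
\ind(T)^{2}=\frac{[T(\mathcal{A}_{x}):\mathcal{A}_{x}]}{[\mathcal{A}_{x}:\mathcal{A}_{x}]}=\frac{[\mathcal{A}_{x+1}:\mathcal{A}_{x}]}{1}=\phi^{2},
$$
since $\mathcal{A}_{x}\subseteq\mathcal{A}_{x+1}$ is a single step of the $\Fib$ basic construction tower recalled in Remark \ref{subfactor_perspective} and so has Jones index $\phi^{2}$. Applying multiplicativity of $\ind$ yields $\ind(T^{n})=\phi^{n}$ for every $n\in\mathbbm{Z}$, giving the desired surjection onto $\{\phi^{n}:n\in\mathbbm{Z}\}$.

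No genuine obstacle arises here: translation is tautologically a generalized translation and its index is a direct application of the formula proved in Section 4. The complementary containment — that the image actually lies inside $\{\phi^{n}:n\in\mathbbm{Z}\}$ (after quotienting by $\FDQC$) — is the substantive content of the forthcoming kernel analysis underlying Theorem \ref{Fibthm}, and will rely on Lemma \ref{FibIntermediate} and the torsion-freeness of $\Fib$ rather than on the present construction.
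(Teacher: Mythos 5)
The statement being proved is that the image of $\ind$ is \emph{exactly} $\{\phi^{n}:n\in\mathbbm{Z}\}$, which has two halves: (i) every $\phi^{n}$ is realized, and (ii) the image is \emph{contained} in $\{\phi^{n}:n\in\mathbbm{Z}\}$. Your proof handles (i) correctly and exactly as the paper does --- translations are generalized translations built from the factorization $\tau\cong\tau\otimes\mathbbm{1}$, giving index $\dim(\tau)=\phi$, and multiplicativity finishes it. But you have punted on (ii), and the justification you give for doing so is mistaken.

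You write that the containment ``is the substantive content of the forthcoming kernel analysis underlying Theorem~\ref{Fibthm}.'' That is not correct. The kernel analysis (Lemma~\ref{pulldown}, Corollaries~\ref{fix1}--\ref{fixloc}, and the final theorem of Section~\ref{golden}) proves that $\ind(\alpha)=1$ implies $\alpha\in\FDQC(A_{\tau})$, i.e.\ \emph{injectivity} of the induced map on $\QCA/\FDQC$. That statement says nothing about which real numbers can appear as indices; a priori the image of $\ind$ could be any subgroup of $\mathbbm{R}^{\times}_{+}$ containing $\phi$, and injectivity alone would not rule that out. The containment (ii) must be, and in the paper is, proved \emph{inside} this theorem: one picks $y<x<z$ with $\mathcal{A}_{y}\subseteq\alpha(\mathcal{A}_{x})\subseteq\mathcal{A}_{z}$, applies Lemma~\ref{FibIntermediate} to deduce that $_{\alpha(\mathcal{A}_{x})}L^{2}(\mathcal{A}_{z})_{\alpha(\mathcal{A}_{x})}$ is an indecomposable Q-system in a copy of $\Fib$, uses torsion-freeness to write it as $X\otimes\overline{X}$, concludes $[\mathcal{A}_{z}:\alpha(\mathcal{A}_{x})]^{1/2}=\dim X\in\mathbbm{Z}[\phi]$, hence $\ind(\alpha)^{-1}\in\mathbbm{Z}[\phi]$, and by symmetry $\ind(\alpha)\in\mathbbm{Z}[\phi]_{+}^{\times}=\{\phi^{n}\}$. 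You correctly name the needed ingredients (Lemma~\ref{FibIntermediate} and torsion-freeness of $\Fib$), but you must actually deploy them here, not defer to the injectivity argument, which uses them toward a different end.
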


\begin{proof}
First, we claim that the image of $\text{Ind}$ is contained in the subring $\mathbbm{Z}[\phi]$ of $\mathbbm{R}$. Indeed, let $y<x<z$ such that $\mathcal{A}_{y}\subseteq \alpha(\mathcal{A}_{x})\subseteq \mathcal{A}_{z}$. Then $\text{Ind}(\alpha)^{-1}=\left(\frac{[\mathcal{A}_{z}:\alpha(\mathcal{A}_{x})]}{[\mathcal{A}_{z}:\mathcal{A}_{x}]}\right)^{\frac{1}{2}}=[\mathcal{A}_{z}:\alpha(\mathcal{A}_{x})]^{\frac{1}{2}}\phi^{-(z-x)}$. 
By the previous lemma $_{\alpha(\mathcal{A}_{x})} L^{2}(\mathcal{A}_{z}) _{\alpha(\mathcal{A}_{x})}$ is an indecomposable Q-system in $(\Fib_{y})^{*}_{\alpha(\mathcal{A}_{x})}\cong \Fib$, hence there exists some object $X\in (\Fib_{y})^{*}_{\alpha(\mathcal{A}_{x})}$ with $_{\alpha(\mathcal{A}_{x})} L^{2}(\mathcal{A}_{z}) _{\alpha(\mathcal{A}_{x})}\cong X\otimes \bar{X}$.
In particular,
$$
[\mathcal{A}_{z}:\alpha(\mathcal{A}_{x})]=\text{dim}(_{\alpha(\mathcal{A}_{x})} L^{2}(\mathcal{A}_{z}) _{\alpha(\mathcal{A}_{x})})=\text{dim}(X)^{2} 
$$
Thus, $[\mathcal{A}_{z}:\alpha(\mathcal{A}_{x})]^{\frac{1}{2}}=\dim(X)$. Since every object in $\Fib$ has dimension value in $\mathbbm{Z}[\phi]$, we have $\dim(X)\in \mathbbm{Z}[\phi]$. Since $\phi^{-1}=\phi-1$, $\phi^{-(z-x)}=(\phi-1)^{z-x}\in \mathbbm{Z}[\phi]$. Thus, $\text{Ind}(\alpha)^{-1}\in \mathbbm{Z}[\phi]$. Note that $\text{Ind}(\alpha^{-1})^{-1}=\text{Ind}(\alpha)$, so $\text{Ind}(\alpha)\in \mathbbm{Z}[\phi]_+^{\times}$. The group $\mathbbm{Z}[\phi]_+^{\times}$ of positive units of the ring $\mathbbm{Z}[\phi]$ consists precisely of integral powers of $\phi$. Thus, the image of $\ind$ lies in $\{\phi^{n}\ :\ n\in \mathbbm{Z}\}$.

It remains to show that every $\phi^{n}$ is realized by some QCA. These are seen to be the indices of the translations.
\end{proof}

Next, we show that the index $1$ QCA on $A_\tau$ are precisely FDQC. First, given an index $1$ QCA $\alpha$ with spread $l$, we construct a depth $1$-circuit $\beta_1$ such that $\beta_1\circ \alpha$ maps $\mathcal{A}_{2kl}$ onto itself for all $k\in\mathbbm{Z}$. Then, we construct another depth $1$-circuit $\beta_2$ such that $\beta_2\circ\beta_1\circ \alpha$ maps $\mathcal{A}_x$ onto itself for all $x\in\mathbbm{Z}$. Such a QCA automatically fixes the Jones projections. Since the algebra $A_\tau$ is generated by the Jones projections, it follows that $\beta_2\circ \beta_1\circ \alpha$ is the identity. Hence, we have $\alpha=\beta_1^{-1}\circ \beta_2^{-1}$ which is a FDQC.

In order to construct the above depth-1 circuits, one must choose specific local unitaries that constitute these depth-1 circuits. This is done by iterative application of Lemma \ref{pulldown}.

\begin{lem}\label{jonesproj}
    Let $A_0\subseteq A_1\subseteq A_2$ be a unital inclusion of multimatrix algebras and $n^{(i)}$ be the dimension vector of $A_i$. We allow $n^{(i)}$ to have zero entries with the understanding that $A_i\cong \bigoplus_{j: n_j^{(i)}\ne 0}M_{n_j^{(i)}}(\mathbbm{C})$. Suppose we can identify the slots of vectors $n^{(0)}$ and $n^{(2)}$ so that there is an indecomposable matrix $T$ with nonnegative integer entries satisfying 
    $$
    n^{(0)}T^t=n^{(1)}\ \ \ \text{ and }\ \ \ n^{(1)}T=n^{(2)}.
    $$
    Let $\tr$ be a unique tracial state on $A_2$ whose corresponding trace vector $t^{(2)}$ is the Perron-Frobenius eigenvector of $T^t T$ with the eigenvalue $\frac{1}{\lambda}$. Then there is a projection $f$ in $A_0'\cap A_2$ such that $E(f)=\lambda\cdot 1$ where $E$ is the $\tr$-preserving conditional expectation from $A_2$ to $A_1$. 
\end{lem}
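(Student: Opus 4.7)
The approach is to identify $A_2$ (as an algebra over $A_1$) with the abstract basic construction of $A_0 \subseteq A_1$ with respect to the tracial state, and then transport the canonical Jones projection across this identification. The Perron--Frobenius hypothesis on $t^{(2)}$ is precisely the Markov condition that singles out the basic-construction trace.

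First I would form the basic construction $\tilde A_2 := \langle A_1, e \rangle$ of $A_0 \subseteq A_1$ with respect to $\tr|_{A_1}$, realized on $L^2(A_1, \tr|_{A_1})$ with $e$ the orthogonal projection onto $L^2(A_0)$. By the standard Goodman--de la Harpe--Jones theory for multi-matrix algebras, $\tilde A_2$ is itself a multi-matrix algebra, $e \in A_0' \cap \tilde A_2$ satisfies $eae = E_{A_0}(a)e$ for $a \in A_1$, the inclusion matrix of $A_1 \subseteq \tilde A_2$ equals $T$ (with central summands of $\tilde A_2$ naturally indexed by those of $A_0$), and there is a distinguished Markov trace $\tilde\tr$ on $\tilde A_2$ extending $\tr|_{A_1}$ and characterized by $\tilde\tr(xe) = \lambda \tr|_{A_1}(x)$ for $x \in A_1$. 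In particular $\tilde E(e) = \lambda \cdot 1_{A_1}$, where $\tilde E:\tilde A_2 \to A_1$ denotes the $\tilde\tr$-preserving conditional expectation.

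Next, the indecomposability of $T$ makes the positive Perron--Frobenius eigenvector of $T^t T$ unique up to scale, and normalizing both traces to be states forces the trace vector of $\tilde\tr$ to coincide with $t^{(2)}$ under the natural slot identification. Since $A_1 \subseteq A_2$ and $A_1 \subseteq \tilde A_2$ are then unital inclusions of multi-matrix algebras with the same inclusion matrix $T$ and compatible identification of central summands, the classification of such inclusions (a Skolem--Noether-type argument) furnishes a $*$-isomorphism $\Phi:\tilde A_2 \to A_2$ restricting to the identity on $A_1$, and the matching trace vectors force $\Phi$ to preserve traces and hence to intertwine $\tilde E$ with $E$. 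Setting $f := \Phi(e)$ then finishes the proof: $f$ is a projection; it lies in $A_0' \cap A_2$ because $e$ commutes with $A_0$ and $\Phi|_{A_0} = \mathrm{id}$; and $E(f) = \Phi(\tilde E(e)) = \Phi(\lambda \cdot 1) = \lambda \cdot 1$.

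The main obstacle I anticipate is checking that the natural identification of central summands of $\tilde A_2$ with those of $A_0$ supplied by the basic construction is compatible with the slot identification assumed in the hypothesis; once that is secured, $\Phi$ can be chosen to be the identity on $A_1$. This compatibility reduces to the indecomposability of $T$ forcing uniqueness of the Perron--Frobenius eigenvector of $T^tT$, combined with the standard uniqueness of unital inclusions of finite-dimensional $C^*$-algebras with a prescribed inclusion matrix.
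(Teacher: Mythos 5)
Your proposal is correct, but it takes a genuinely different route from the paper. The paper's proof is fully explicit: it fixes a path-algebra realization of $A_0\subseteq A_1\subseteq A_2$ built from the Bratteli diagram, writes down $f$ as a concrete sum of path-algebra matrix units with square-root trace-vector coefficients (essentially the standard path-model formula for a Jones projection), checks it is a projection, and then verifies $E(f)=\lambda\cdot 1$ directly from the path-algebra formula for the trace-preserving conditional expectation. Your argument instead stays at the structural level: form the abstract basic construction $\langle A_1,e\rangle$, observe the Perron--Frobenius hypothesis on $t^{(2)}$ is equivalent to the Markov condition for $\tr|_{A_1}$, match trace vectors, invoke the classification of unital inclusions of multi-matrix algebras to produce $\Phi:\langle A_1,e\rangle\to A_2$ fixing $A_1$, and set $f=\Phi(e)$. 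Both approaches are valid; yours relies on more background (Markov trace existence, Skolem--Noether for multi-matrix inclusions) while the paper's is self-contained and gives an explicit formula, which is arguably what one wants downstream since $f$ has to be located inside the concrete relative commutant.

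On the one subtlety you flag: the issue is not really whether the basic construction's indexing of central summands by $A_0$-slots agrees with the hypothesized identification of $A_0$- and $A_2$-slots (it does, tautologically), but whether the Skolem--Noether isomorphism $\Phi$, which a priori only fixes $A_1$, respects that indexing. In general $\Phi$ may induce a nontrivial permutation $\sigma$ of the central summands, but since $\Phi|_{A_1}=\mathrm{id}$ forces $\sigma$ to be a graph automorphism of the Bratteli diagram fixing the $A_1$ side, $\sigma$ commutes with $T^tT$; indecomposability makes the Perron--Frobenius eigenvector $t^{(2)}$ unique up to scale, hence $\sigma$-invariant, so $\Phi$ is automatically trace-preserving and intertwines the two conditional expectations. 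This is exactly the point your last paragraph gestures at, and it does close; but spelling it out this way would make the argument airtight. The paper avoids the issue entirely by never invoking an abstract isomorphism.
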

\begin{proof}
    Consider the Bratteli diagrams $\Gamma_0^1$, $\Gamma_1^2$, and $\Gamma_0^2$ for the inclusion $A_0\subseteq A_1$, $A_1\subseteq A_2$, and $A_0\subseteq A_1\subseteq A_2$, respectively. We assume each of the edges is directed from the vertex of a smaller algebra to that of a larger algebra.
    By using $\Gamma_0^2$, we can introduce the path algebra representation of $A_0\subseteq A_1\subseteq A_2$. (For the definition of the path algebra, see for instance, \cite{MR1473221}.) For each path $\gamma$ on $\Gamma_0^2$, let $s(\gamma)$ and $r(\gamma)$ be the source and the range of the path $\gamma$, respectively. Since the ``generalized inclusion matrices" $T^t$ and $T$ for $A_0\subseteq A_1$ and $A_1\subseteq A_2$ are transpose to each other, we can pair each edge $\gamma_1$ in $\Gamma_0^1$ with an edge $\gamma_2$ in $\Gamma_1^2$ such that $s(\gamma_1)=r(\gamma_2)$ and $r(\gamma_1)=s(\gamma_2)$. We denote such a $\gamma_2$ by $\gamma_1^*$. Throughout the proof, the edges in $\Gamma_0^1$ and $\Gamma_1^2$ will be denoted with the subscripts $1$ and $2$, respectively, and the length $2$ paths in $\Gamma_0^2$ will be denoted without subscript.
    
    Let $t^{(0)}$ and $t^{(1)}$ be the trace vector of $A_0$ and $A_1$ induced by $\tr$.
    Choose
    $$
    f=\sum_{\genfrac{}{}{0pt}{3}{s(\gamma_1)=s(\delta_1),}{n_{s(\gamma_1)}^{(0)}\ne 0}}\sqrt{\frac{t_{r(\gamma_1)}^{(1)}t_{r(\delta_1)}^{(1)}}{t_{s(\gamma_1)}^{(0)}t_{s(\delta_1)}^{(0)}}}e_{\gamma_1\gamma_1^*,\delta_1\delta_1^*}
    $$
    where $e_{\gamma_1\gamma_2,\delta_1\delta_2}$ are the matrix units in the path algebra representation.
    It is easy to show that $f$ is a projection. For $x\in A_0'\cap A_2$, $x$ can be written as 
    $$
    x=\sum_{{\genfrac{}{}{0pt}{3}{s(\gamma)=s(\delta),}{r(\gamma)=r(\delta)}}}x_{\gamma\delta}e_{\gamma\delta}.
    $$
    By Proposition 5.4.3 of \cite{MR1473221},
    $$
    E(x)=\sum_{{\genfrac{}{}{0pt}{3}{s(\gamma_1)=s(\delta_1),}{r(\gamma_1)=r(\delta_1)}} }\sum_{ \theta_2:s(\theta_2)=r(\gamma_1)}\frac{t_{r(\theta_2)}^{(2)}}{t_{s(\theta_2)}^{(1)}}x_{\gamma_1\theta_2, \delta_1\theta_2} e_{\gamma_1,\delta_1}.
    $$
    where $e_{\gamma_1,\delta_1}=\sum_{\theta_2':s(\theta_2')=r(\gamma_1)}e_{\gamma_1\theta_2', \delta_1\theta_2'}$. Thus,
    \[
        E(f)=\sum_{\gamma_1}\frac{t_{s(\gamma_1)}^{(2)}}{t_{r(\gamma_1)}^{(1)}}\frac{t_{r(\gamma_1)}^{(1)}}{t_{s(\gamma_1)}^{(0)}} e_{\gamma_1,\gamma_1}=\lambda\sum_{\gamma_1} e_{\gamma_1\gamma_1}=\lambda\cdot 1.\qedhere
    \]
\end{proof}

%[Suppose we have the following quadrilateral.
%$$
%\begin{array}{ccc}
 %    M&\subseteq &P  \\
  %   \cup&&\cup\\
   %  N'\cap M&\subseteq& N'\cap P 
%\end{array}
%$$
%Choose $x\in N'\cap P$ and $a\in N$. Then
%$$
%aE_M^P(x)=E_M^P(ax)=E_M^P(xa)=E_M^P(x)a.
%$$
%Thus, $E_M^P(N'\cap P)\subseteq N'\cap M$.
%Hence, the above is a commuting square.]

\begin{lem}\label{MvNequiv}
    Let $p$ and $q$ be projections in $\End(\tau^{\otimes n})$ with the same normalized categorical trace. Then $p$ and $q$ are equivalent. 
\end{lem}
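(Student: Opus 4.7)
The plan is to reduce the equivalence question to comparing matrix ranks in each simple summand of $\End(\tau^{\otimes n})$, and then to use irrationality of $\phi$ to deduce that equal normalized categorical traces force these ranks to match.

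First I would decompose according to the central idempotents of $\End(\tau^{\otimes n})$. Using the fusion rules of $\Fib$, one has $\tau^{\otimes n} \cong a_n \mathbbm{1} \oplus b_n \tau$ where $a_n, b_n$ are (shifted) Fibonacci numbers, so
$$\End(\tau^{\otimes n}) \;\cong\; M_{a_n}(\mathbbm{C}) \oplus M_{b_n}(\mathbbm{C}),$$
with the two summands corresponding to the isotypic components for $\mathbbm{1}$ and $\tau$. Write $p = p_1 \oplus p_2$ and $q = q_1 \oplus q_2$ under this decomposition, and let $r_i = \text{rank}(p_i)$ and $s_i = \text{rank}(q_i)$.

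Next I would compute the normalized categorical trace in terms of these ranks. A minimal projection in the $\mathbbm{1}$-summand has categorical trace $\dim(\mathbbm{1}) = 1$ and a minimal projection in the $\tau$-summand has categorical trace $\dim(\tau) = \phi$ (this is the standard statement that the categorical trace restricted to each isotypic summand is $\dim(Y)$ times the matrix trace, where $Y$ is the corresponding simple). Normalizing by $\dim(\tau^{\otimes n}) = a_n + b_n \phi$, one gets
$$\tr(p) = \frac{r_1 + r_2\,\phi}{a_n + b_n\,\phi}, \qquad \tr(q) = \frac{s_1 + s_2\,\phi}{a_n + b_n\,\phi}.$$

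The hypothesis $\tr(p)=\tr(q)$ then becomes $r_1 + r_2 \phi = s_1 + s_2 \phi$. Since $\phi$ is irrational, $\{1,\phi\}$ is $\mathbbm{Q}$-linearly independent, forcing $r_1=s_1$ and $r_2=s_2$. Finally, two projections in a multimatrix algebra with equal rank in each simple summand are Murray--von Neumann equivalent via a partial isometry built blockwise in $M_{a_n}(\mathbbm{C}) \oplus M_{b_n}(\mathbbm{C})$, which completes the proof. There is no serious obstacle; the only nontrivial ingredient is the irrationality/$\mathbbm{Z}$-linear independence of $1$ and $\phi$ that converts the single-scalar trace equality into matching ranks on each summand.
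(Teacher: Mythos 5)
Your proof is correct and is essentially identical to the paper's: same decomposition $\End(\tau^{\otimes n})\cong M_{a_n}(\mathbbm{C})\oplus M_{b_n}(\mathbbm{C})$, same expression of the normalized trace in terms of the block ranks (your $\frac{r_1+r_2\phi}{a_n+b_n\phi}$ equals the paper's $\frac{p_1}{\phi^n}+\frac{p_2}{\phi^{n-1}}$ since $\phi^n=a_n+b_n\phi$), and the same appeal to irrationality of $\phi$ to match ranks blockwise. The only cosmetic difference is that the paper concludes with unitary (not merely Murray--von Neumann) equivalence, which of course also follows here since the complementary ranks match as well.
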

\begin{proof}
   Recall $\text{End}(\tau^{\otimes n})\cong M_{a_{n}}(\mathbbm{C})\oplus M_{b_{n}}(\mathbbm{C})$, where $a_{n}=\text{dim}(\mathcal{C}(\mathbbm{1}, \tau^{\otimes n}))$ and $b_{n}=\text{dim}(\mathcal{C}(\tau, \tau^{\otimes n}))$. The canonical tracial state assigns $\frac{1}{\phi^{n}}$ to minimal projections in the first factor while assigning  $\frac{1}{\phi^{n-1}}$ to minimal projections in the second factor. Thus $\tr(p)=\frac{p_{1}}{\phi^{n}}+\frac{p_{2}}{\phi^{n-1}}$ for integers $0\le p_{1}\le a_{n}$ and $0\le p_{2}\le b_{n}$. 

   If $\tr(p)=\tr(q)$, then we have $$\frac{(p_{1}-q_{1})}{\phi^{n}}+\frac{(p_{2}-q_{2})}{\phi^{n-1}}=0$$
   and multiplying by $\phi^{n}$ we see 
$$(p_1-q_1)+(p_2-q_2)\phi=0.$$
Since $\phi$ is not rational, we must have $p_{1}=q_1$ and $p_{2}=q_{2}$. Thus $p$ is Murray-von Neumann equivalent to $q$, hence there is a unitary conjugating them as desired.
\end{proof}

\begin{lem}\label{pulldown}
    Let $\alpha$ be an index 1 QCA.
    If $\mathcal{A}_{a}\subseteq \alpha(\mathcal{A}_x)\subsetneq\mathcal{A}_{b}$ for $a\le x <b$ then there is a unitary $u\in \mathcal{A}_{a}'\cap \mathcal{A}_{b}=A_{(a,b]}$ such that $\mathcal{A}_{a}\subseteq\Ad(u)(\alpha(\mathcal{A}_x))\subseteq\mathcal{A}_{b-1}$.
\end{lem}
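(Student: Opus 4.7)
The plan is to exploit the Q-system classification of intermediate subfactors in $\Fib$ to realize $\alpha(\mathcal{A}_x)$ inside a specific intermediate subfactor of $\mathcal{A}_a\subseteq \mathcal{A}_b$ that is unitarily conjugate to $\mathcal{A}_{b-1}$ via an element of $\mathcal{A}_a'\cap \mathcal{A}_b = A_{(a,b]}$.

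First, I would apply Lemma \ref{FibIntermediate} to the intermediate subfactor $\mathcal{A}_a\subseteq \alpha(\mathcal{A}_x)\subseteq \mathcal{A}_b$, obtaining a decomposition ${}_{\alpha(\mathcal{A}_x)} L^{2}(\mathcal{A}_b) _{\alpha(\mathcal{A}_x)} \cong Y\otimes \bar Y$ in $(\Fib_a)^{*}_{\alpha(\mathcal{A}_x)}\cong \Fib$. Since $\ind(\alpha)=1$, we have $[\mathcal{A}_b:\alpha(\mathcal{A}_x)]=\phi^{2(b-x)}$, so $\dim Y=\phi^{b-x}$, and hence $Y\cong \tau^{\otimes(b-x)}$ (the unique object of this dimension in $\Fib$, by the $\mathbbm{Q}$-linear independence of $\{1,\phi\}$ used in the previous theorem). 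The factorization $Y\cong \tau^{\otimes(b-x-1)}\otimes \tau$ then produces a sub-Q-system of $Y\otimes \bar Y$ corresponding to an intermediate subfactor $\alpha(\mathcal{A}_x)\subseteq P\subsetneq \mathcal{A}_b$ with $[\mathcal{A}_b:P]=\phi^2$ and with Q-system of $P\subseteq \mathcal{A}_b$ isomorphic to $\tau\otimes \bar\tau$.

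Next, since the $\tau\otimes\bar\tau$ Q-system of $P\subseteq \mathcal{A}_b$ is Morita trivial, this inclusion is realized as a basic construction, producing a Jones projection $e_P\in \mathcal{A}_a'\cap \mathcal{A}_b = A_{(a,b]}$ of normalized categorical trace $\phi^{-2}$. The standard Jones projection $e_b\in A_{(a,b]}$ coming from the basic construction $\mathcal{A}_{b-2}\subseteq \mathcal{A}_{b-1}\subseteq \mathcal{A}_b$ also has trace $\phi^{-2}$ (here we can assume $a\le b-2$; the case $a=b-1$ forces $\alpha(\mathcal{A}_x)=\mathcal{A}_{b-1}$ by the index identity and is trivial). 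Applying Lemma \ref{MvNequiv} in $A_{(a,b]}\cong \End(\tau^{\otimes(b-a)})$, I obtain a unitary $u\in A_{(a,b]}$ with $u e_P u^{*} = e_b$. Because the middle algebra in a basic construction is determined by its Jones projection together with the ambient factor, $\Ad(u)(P)=\mathcal{A}_{b-1}$, and therefore $\Ad(u)(\alpha(\mathcal{A}_x))\subseteq \Ad(u)(P)=\mathcal{A}_{b-1}$, as required.

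The main obstacle will be locating $e_P$ concretely as an element of the finite-dimensional algebra $A_{(a,b]}$ and verifying that its normalized categorical trace is indeed $\phi^{-2}$. This amounts to a Bratteli-diagram computation in the spirit of Lemma \ref{jonesproj}, applied to suitable triples of relative commutants of $\mathcal{A}_a$; it uses crucially that $\mathcal{A}_a\subseteq P$ so that any Jones projection for $P\subseteq \mathcal{A}_b$ automatically lies in $\mathcal{A}_a'\cap \mathcal{A}_b$. A subsidiary technical point is the passage from conjugating Jones projections to conjugating intermediate subfactors, which rests on the standard but delicate fact that in a basic construction $N\subseteq M\subseteq M_1$, the middle algebra $M$ is recoverable from $e_N$ and $M_1$, so inner conjugation of $e_N$ by a unitary of $M_1$ transports $M$ accordingly.
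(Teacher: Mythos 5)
Your overall strategy --- identify the relevant intermediate, encode it by a projection, and conjugate that projection to a reference Jones projection via Lemma \ref{MvNequiv} --- is the right idea, and the first paragraph (obtaining $Y\cong\tau^{\otimes(b-x)}$ and the intermediate $P$ with $[\mathcal{A}_b:P]=\phi^2$) runs parallel to the paper. But the last two steps contain a genuine gap, caused by putting the Jones projection in $\mathcal{A}_b$ rather than one level higher.

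The step that fails is the final inference: ``because the middle algebra in a basic construction is determined by its Jones projection together with the ambient factor, $\Ad(u)(P)=\mathcal{A}_{b-1}$.'' For a basic construction $N\subseteq M\subseteq M_1=\langle M,e_N\rangle$, the correct statement is $N=M\cap\{e_N\}'$: the \emph{bottom} algebra is recovered from the \emph{middle} algebra and the Jones projection. The middle $M$ is \emph{not} determined by the pair $(M_1,e_N)$. A quick finite-dimensional instance: take $\mathbbm{C}\subset M_2(\mathbbm{C})\subset M_4(\mathbbm{C})$ with $e$ the rank-one Jones projection, and let $w=2e-1\in M_4\cap\{e\}'$. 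Then $\Ad(w)$ fixes $e$ and $M_4$ but moves $M_2$ (a direct computation shows $\Ad(w)(\sigma_z\otimes 1)\notin M_2\otimes 1$), so $(M_4,e)$ admits two distinct middle algebras. Thus knowing $\Ad(u)(e_P)=e_b$ does not give $\Ad(u)(P)=\mathcal{A}_{b-1}$, and the conclusion $\Ad(u)(\alpha(\mathcal{A}_x))\subseteq\mathcal{A}_{b-1}$ does not follow.

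The earlier step asserting $e_P\in\mathcal{A}_a'\cap\mathcal{A}_b$ has the same root problem. A (downward) Jones projection $e$ for $P\subseteq\mathcal{A}_b$ commutes with the \emph{tunnel} $Q=P\cap\{e\}'$, not with all of $P$; and the tunnel is only determined up to inner conjugation by unitaries of $P$. So ``$\mathcal{A}_a\subseteq P$ hence $e_P\in\mathcal{A}_a'$'' is circular: you would need to first produce a tunnel $Q$ containing $\mathcal{A}_a$, which is essentially the content you are trying to establish.

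The paper avoids both issues by going one step up the Jones tower. It constructs a projection $f_{b+1}\in\alpha(\mathcal{A}_x)'\cap\mathcal{A}_{b+1}$ (not in $\mathcal{A}_b$) with $E^{\mathcal{A}_{b+1}}_{\mathcal{A}_b}(f_{b+1})=\phi^{-2}$, via Lemma \ref{jonesproj} applied to a suitable finite-dimensional triple of relative commutants of $\alpha(\mathcal{A}_x)$; this guarantees $f_{b+1}$ commutes with $\alpha(\mathcal{A}_x)$ by construction, so there is no need to reason about tunnels. Lemma \ref{MvNequiv} then gives a unitary $v\in\mathcal{A}_a'\cap\mathcal{A}_{b+1}$ with $v^*e_{b+1}v=f_{b+1}$, but since $v$ need not lie in $\mathcal{A}_b$, a pull-down step ($u=\phi^2\,E^{\mathcal{A}_{b+1}}_{\mathcal{A}_b}(e_{b+1}v)$, shown to be unitary in $\mathcal{A}_a'\cap\mathcal{A}_b$) is needed to trade $v$ for a unitary of $\mathcal{A}_b$. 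With $u\in\mathcal{A}_b$ and $\Ad(u)(f_{b+1})=e_{b+1}$, the conclusion comes cleanly from the \emph{correct} reconstruction fact: $\Ad(u)(\alpha(\mathcal{A}_x))\subseteq\mathcal{A}_b\cap\{e_{b+1}\}'=\mathcal{A}_{b-1}$. To repair your argument you would need both a projection realizing $P$ inside $\mathcal{A}_{b+1}$ rather than $\mathcal{A}_b$ and this pull-down step; once you make those two changes you are essentially reproducing the paper's proof.
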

\begin{proof}
    Let $\mathcal{A}_{b-1}\subseteq \mathcal{A}_{b}\stackrel{e_{b+1}}{\subseteq} \mathcal{A}_{b+1}$ be the basic construction. Note that $e_{b+1}\in \mathcal{A}_{b-1}'\cap \mathcal{A}_{b+1}=A_{[b,b+1]}$. Now, by the QCA index $1$ assumption, $[\mathcal{A}_{b}: \alpha(\mathcal{A}_x)]=\phi^{2(b-x)}$. By Lemma \ref{FibIntermediate}, the $\alpha(\mathcal{A}_x)$ bimodule $L^{2}(\mathcal{A}_{b})$ is in $(\Fib_{a})^{*}_{\alpha(\mathcal{A}_x)}\cong \Fib$, and thus there exists some $X\in (\Fib_{a})^{*}_{\alpha(\mathcal{A}_x)}$ with $X\otimes \overline{X}=X\otimes X \cong L^{2}(\mathcal{A}_{b}) $. Here, we used the fact that every object in $\Fib$ is self-dual.
    Note that the function from isomorphism classes of objects of $\Fib$ to $\mathbbm{R}$ given by the dimension function is injective. Thus, 
$$\phi^{2(b-x)}=[\mathcal{A}_{b}: \alpha(\mathcal{A}_x)]=\text{dim}(_{\alpha(\mathcal{A}_{x})} L^{2}(\mathcal{A}_{b}) _{\alpha(\mathcal{A}_{x})}) =\dim(X)^{2}$$
which implies $X\cong \tau^{\otimes b-x}$.

In particular, the standard invariants of $\alpha(\mathcal{A}_x)\subseteq \mathcal{A}_b$ and $\mathcal{A}_x \subseteq \mathcal{A}_b$ are isomorphic. Hence, the adjacency matrix for the principal graph for the subfactor $\alpha(\mathcal{A}_x)\subseteq \mathcal{A}_b$ is 
    $$
    \begin{bmatrix}
    0&1\\
    1&1  
    \end{bmatrix}^{b-x}.
    $$
    Thus, we can choose a subalgebra $\mathcal{B}\subseteq \alpha(\mathcal{A}_x)'\cap \mathcal{A}_b$, whose inclusion matrix in the sense of Lemma \ref{jonesproj} is
    $$
    \begin{bmatrix}
    0&1\\
    1&1  
    \end{bmatrix}.
    $$
    The chain of algebras $\mathcal{B}\subseteq \alpha(\mathcal{A}_x)'\cap \mathcal{A}_b\subseteq \alpha(\mathcal{A}_x)'\cap \mathcal{A}_{b+1}$ satisfies the condition in Lemma \ref{jonesproj}. Thus, there exists a projection $f_{b+1}\in \alpha(\mathcal{A}_x)'\cap \mathcal{A}_{b+1}$ such that $E_{\mathcal{A}_{b}}^{\mathcal{A}_{b+1}}(f_{b+1})=\frac{1}{\phi^2}$ where $E_{\mathcal{A}_{b}}^{\mathcal{A}_{b+1}}$ is the unique trace-preserving conditional expectation from $\mathcal{A}_{b+1}$ to $\mathcal{A}_{b}$. 
    Then by Lemma \ref{MvNequiv}, there is a unitary $v$ in $\mathcal{A}_{a}'\cap \mathcal{A}_{b+1}$ such that $f_{b+1}=v^*e_{b+1}v$. Let $u=\phi^2 E_{\mathcal{A}_{b}}^{\mathcal{A}_{b+1}}(e_{b+1}v)$, i.e. $u$ is the unique element in $\mathcal{A}_{b}$ such that $e_{b+1}v=e_{b+1}u$ by Lemma 1.2 of \cite{MR860811}. Note that $u\in \mathcal{A}_{a}'\cap \mathcal{A}_{b}$. 
    %In fact, $u$ is an element in $\mathcal{A}_{a}'\cap \mathcal{A}_{b}$, since the relative commutants form a commuting square.
    We claim that $u$ is a unitary.
    \begin{align*}
        u^*u&=\phi^2 u^* E_{\mathcal{A}_{b}}^{\mathcal{A}_{b+1}}(e_{b+1})u=\phi^2 E_{\mathcal{A}_{b}}^{\mathcal{A}_{b+1}}(u^*e_{b+1}u)=\phi^2 E_{\mathcal{A}_{b}}^{\mathcal{A}_{b+1}}(v^*e_{b+1}v)\\
        &=\phi^2 E_{\mathcal{A}_{b}}^{\mathcal{A}_{b+1}}(f_{b+1})=1.
    \end{align*}
    Thus, $u$ is an isometry in the finite-dimensional von Neumann algebra $\mathcal{A}_{a}'\cap \mathcal{A}_{b}$. Therefore, it is a unitary. Since $f_{b+1}=u^*e_{b+1}u$ by construction, we have $e_{b+1}=\Ad(u)(f_{b+1})$. Then 
     \begin{align*}
         \mathcal{A}_{a}&=\Ad(u)(\mathcal{A}_{a})\subseteq \Ad(u)(\alpha(\mathcal{A}_{x}))\subseteq \Ad(u)(\mathcal{A}_{b})\cap \{\Ad(u)(f_{b+1})\}'\\
     &=\mathcal{A}_{b}\cap \{e_{b+1}\}'=\mathcal{A}_{b-1}.\qedhere
     \end{align*}
\end{proof}

\begin{cor}\label{fix1}
    Let $\alpha$ be a QCA with the index $1$ and spread $l$. Then for each $x\in\mathbbm{Z}$ there is a unitary $u\in A_{(x-l,x+l]}$ such that $\Ad(u)(\alpha(\mathcal{A}_x))=\mathcal{A}_x$.
\end{cor}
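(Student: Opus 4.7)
The plan is to prove this as an iterated application of the pull-down Lemma \ref{pulldown}. Since $\alpha$ has spread $l$, we know from the very definition of spread that $\mathcal{A}_{x-l} \subseteq \alpha(\mathcal{A}_x) \subseteq \mathcal{A}_{x+l}$. The idea is to produce a sequence of unitaries $u_1, u_2, \ldots, u_l$, each living in the interval $A_{(x-l, x+l]}$, so that composing with $\mathrm{Ad}(u_k \cdots u_1)$ progressively lowers the upper bound by one unit while keeping the lower bound at $\mathcal{A}_{x-l}$.

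Set $a := x-l$, $\beta_0 := \alpha$, and suppose inductively that we have constructed unitaries $u_1,\ldots,u_k$ with $u_j \in A_{(x-l,\, x+l-j+1]}$ such that $\beta_k := \mathrm{Ad}(u_k \cdots u_1) \circ \alpha$ satisfies
$$\mathcal{A}_{x-l} \subseteq \beta_k(\mathcal{A}_x) \subseteq \mathcal{A}_{x+l-k}.$$
Note that $\beta_k$ is still index $1$, since each $\mathrm{Ad}(u_j)$ is a depth-1 circuit and hence lies in $\ker(\mathrm{Ind})$. If $k < l$, I claim the inclusion $\beta_k(\mathcal{A}_x) \subseteq \mathcal{A}_{x+l-k}$ must be strict. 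Indeed, $\mathrm{Ind}(\beta_k) = 1$ together with the definition of the index forces
$$[\beta_k(\mathcal{A}_x) : \mathcal{A}_{x-l}] = [\mathcal{A}_x : \mathcal{A}_{x-l}] = \phi^{2l},$$
whereas $[\mathcal{A}_{x+l-k} : \mathcal{A}_{x-l}] = \phi^{2(2l-k)} > \phi^{2l}$ whenever $k < l$. Thus the hypothesis of Lemma \ref{pulldown} applies with $b = x+l-k$, producing a unitary $u_{k+1} \in \mathcal{A}_{x-l}' \cap \mathcal{A}_{x+l-k} = A_{(x-l,\, x+l-k]} \subseteq A_{(x-l,\, x+l]}$ with the desired properties, completing the induction step.

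After $l$ iterations we arrive at $\beta_l$ with $\mathcal{A}_{x-l} \subseteq \beta_l(\mathcal{A}_x) \subseteq \mathcal{A}_x$. The same index identity $[\beta_l(\mathcal{A}_x) : \mathcal{A}_{x-l}] = \phi^{2l} = [\mathcal{A}_x : \mathcal{A}_{x-l}]$ now implies, via multiplicativity of the Jones index, that $[\mathcal{A}_x : \beta_l(\mathcal{A}_x)] = 1$, hence $\beta_l(\mathcal{A}_x) = \mathcal{A}_x$. Setting $u := u_l u_{l-1} \cdots u_1$ yields a unitary in $A_{(x-l, x+l]}$ with $\mathrm{Ad}(u)(\alpha(\mathcal{A}_x)) = \mathcal{A}_x$, as required.

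The main point to be careful about is the strictness check that lets Lemma \ref{pulldown} fire at every step; this is the reason I compute the Jones index of $\beta_k(\mathcal{A}_x)$ over $\mathcal{A}_{x-l}$ explicitly and compare it with $[\mathcal{A}_{x+l-k} : \mathcal{A}_{x-l}]$. The containment of each $u_k$ in the fixed interval $A_{(x-l, x+l]}$ is automatic from the locality statement in Lemma \ref{pulldown} since the intervals $(x-l, x+l-k+1]$ shrink inside $(x-l, x+l]$ as $k$ grows.
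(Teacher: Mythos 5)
Your proof is correct and takes exactly the route the paper does: the paper's proof is the one-liner ``Applying Lemma \ref{pulldown} $l$ times gives the desired result,'' and your argument fleshes out precisely that iteration, including the strictness check (needed to invoke the lemma at each stage) and the final multiplicativity step that forces $\beta_l(\mathcal{A}_x)=\mathcal{A}_x$. Nothing is missing or different in approach; you have simply made explicit what the authors leave implicit.
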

\begin{proof}
    Since the spread of $\alpha$ is $l$, $\mathcal{A}_{x-l}\subseteq \alpha(\mathcal{A}_x)\subseteq\mathcal{A}_{x+l}$ for all $x$.
    Applying Lemma \ref{pulldown} $l$ times gives the desired result.
\end{proof}

\begin{cor}\label{fixmany}
    Let $\alpha$ be a QCA with index $1$ and spread $l$. Then there is a depth-$1$ circuit $\Ad(U)$ such that $\Ad(U)(\alpha(\mathcal{A}_{2kl}))=\mathcal{A}_{2kl}$ for all $k\in\mathbbm{Z}$.
\end{cor}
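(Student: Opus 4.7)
The strategy I would take is to apply Corollary \ref{fix1} at each anchor point $x = 2kl$, $k \in \mathbbm{Z}$, to obtain unitaries $u_k \in A_{((2k-1)l,(2k+1)l]}$ with $\Ad(u_k)(\alpha(\mathcal{A}_{2kl})) = \mathcal{A}_{2kl}$, and then glue them into a single unitary $U := \prod_{k \in \mathbbm{Z}} u_k$. The intervals $\{((2k-1)l,(2k+1)l]\}_{k \in \mathbbm{Z}}$ partition $\mathbbm{Z}$ into blocks of length $2l$, so by locality the $u_k$ commute pairwise, the infinite product is well-defined on the local algebra in any order, and $\Ad(U)$ is a depth-$1$ circuit in the sense of Section \ref{basicdefn}.

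With $U$ in hand, the task is to verify $\Ad(U)(\alpha(\mathcal{A}_{2kl})) = \mathcal{A}_{2kl}$ for every fixed $k$. Writing $U = u_k \cdot \prod_{k' \ne k} u_{k'}$ and invoking Corollary \ref{fix1} on the factor $u_k$, the identity reduces to the claim that $\Ad(u_{k'})$ preserves $\alpha(\mathcal{A}_{2kl})$ set-wise for every $k' \ne k$. The spread bound on $\alpha$ yields $\mathcal{A}_{(2k-1)l} \subseteq \alpha(\mathcal{A}_{2kl}) \subseteq \mathcal{A}_{(2k+1)l}$, and I would split the verification by the sign of $k' - k$.

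For $k' > k$ the support $((2k'-1)l,(2k'+1)l]$ is disjoint from every finite interval with right endpoint at most $(2k+1)l$, so by locality $u_{k'}$ commutes with $\bigcup_{I \le (2k+1)l} A_I$ and hence with its weak closure $\mathcal{A}_{(2k+1)l} \supseteq \alpha(\mathcal{A}_{2kl})$. For $k' < k$, the support lies in $(-\infty,(2k-1)l]$, so $u_{k'} \in \mathcal{A}_{(2k-1)l} \subseteq \alpha(\mathcal{A}_{2kl})$, and $\Ad(u_{k'})$ acts as an inner automorphism of $\alpha(\mathcal{A}_{2kl})$ that stabilizes it. Both cases give the required set-wise invariance, which together with Corollary \ref{fix1} finishes the proof. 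I do not anticipate any serious obstacle; the only delicate point is the spacing of the partition, since with blocks narrower than $2l$ the right-side commutation argument breaks, whereas with wider blocks the left-side containment fails, so the block length $2l$ produced by Corollary \ref{fix1} is precisely what makes the asymmetric two-sided argument go through.
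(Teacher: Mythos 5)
Your proposal is correct and follows essentially the same route as the paper: apply Corollary~\ref{fix1} at each anchor $2kl$, glue the resulting unitaries into a depth-$1$ circuit supported on the blocks $((2k-1)l,(2k+1)l]$, and then for a fixed $k$ split the remaining factors into a left part lying inside $\alpha(\mathcal{A}_{2kl})$ (hence stabilizing it as an inner automorphism) and a right part commuting with $\alpha(\mathcal{A}_{2kl})$. The paper packages the $k'<k$ and $k'>k$ factors into single unitaries $U_-$ and $U_+$ rather than treating each $u_{k'}$ separately, but the reasoning is identical, including your observation that the block length $2l$ is exactly what lets the two-sided containment $\mathcal{A}_{(2k-1)l}\subseteq \alpha(\mathcal{A}_{2kl})\subseteq\mathcal{A}_{(2k+1)l}$ drive both halves of the argument.
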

\begin{proof}
    For each $k\in\mathbbm{Z}$, we can choose a unitary $u_k\in A_{((2k-1)l,(2k+1)l]}$ such that 
    $$
    \Ad(u_k)(\alpha(\mathcal{A}_{2kl}))=\mathcal{A}_{2kl}.
    $$ 
    Since all $u_k$ are supported on disjoint intervals, $\Ad(U)$ for $U=\prod_{k\in\mathbbm{Z}}u_k$ is a depth-1 circuit.
    Fix $k\in\mathbbm{Z}$ and let $U_-=\prod_{j<k}u_j$ and  $U_+=\prod_{j>k}u_j$. 
    Then $U_-$ and $U_+$ are products of mutually commuting unitaries supported on $(-\infty, (2k-1)l]$ and $((2k+1)l,\infty)$.
    Since $\mathcal{A}_{(2k-1)l}\subseteq\alpha(\mathcal{A}_{2kl})\subseteq \mathcal{A}_{(2k+1)l}$,
    \begin{align*}
        \Ad(U_-)(\alpha(\mathcal{A}_{2kl}))&=\alpha(\mathcal{A}_{2kl}),\\
        \Ad(U_+)(\alpha(\mathcal{A}_{2kl}))&=\alpha(\mathcal{A}_{2kl}).
    \end{align*}
    Therefore,
    \begin{align*}
        \Ad(U)(\alpha(\mathcal{A}_{2kl}))&=\Ad(u_k)(\Ad(U_+)(\Ad(U_-)(\alpha(\mathcal{A}_{2kl}))))\\
        &=\Ad(u_k)(\alpha(\mathcal{A}_{2kl}))\\
        &=\mathcal{A}_{2kl}.\qedhere
    \end{align*}
\end{proof}

\begin{cor}\label{fixloc}
    Let $\alpha$ be a QCA with index $1$ and spread $l$. Then there is a depth-$1$ circuit $\Ad(U)$ such that $\Ad(U)(\alpha(A_{(2jl,2kl]}))=A_{(2jl,2kl]}$ for all integers $j<k$.
\end{cor}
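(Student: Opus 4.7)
The plan is to reuse without modification the depth-1 circuit $\Ad(U)$ produced by Corollary \ref{fixmany}, and simply verify that it already carries each local algebra $A_{(2jl,2kl]}$ to itself. The key input is the identification $A_{(x,z]}=\mathcal{A}_{x}'\cap \mathcal{A}_{z}$ established at the end of Remark \ref{subfactor_perspective}, which lets us rewrite the local algebras as relative commutants of the two-sided infinite algebras we have already learned to control.

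First, I recall that $\alpha$ extends uniquely to a trace-preserving automorphism of the bicommutant $\mathcal{A}=A_\tau''$, since $A_\tau$ has a unique tracial state (noted in Section \ref{index}). The inner automorphism $\Ad(U)$ also acts on $\mathcal{A}$. Any automorphism $\beta$ of $\mathcal{A}$ intertwines relative commutants in $B(L^2(A_\tau,\tr))$: for subalgebras $N\subseteq M$ of $\mathcal{A}$, one has $\beta(N'\cap M)=\beta(N)'\cap \beta(M)$, because $N'\cap M$ is just the relative commutant of $N$ inside $M$, which is preserved by any $*$-automorphism of $\mathcal{A}$. Applying this to $\beta=\Ad(U)\circ\alpha$ together with $N=\mathcal{A}_{2jl}$ and $M=\mathcal{A}_{2kl}$ gives
\begin{align*}
(\Ad(U)\circ\alpha)\bigl(A_{(2jl,2kl]}\bigr)
&=(\Ad(U)\circ\alpha)(\mathcal{A}_{2jl})'\cap (\Ad(U)\circ\alpha)(\mathcal{A}_{2kl})\\
&=\mathcal{A}_{2jl}'\cap \mathcal{A}_{2kl}=A_{(2jl,2kl]},
\end{align*}
where the second equality uses Corollary \ref{fixmany} applied at both endpoints $2jl$ and $2kl$.

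There is essentially no obstacle beyond writing this calculation, since all the real work was carried out in Lemma \ref{pulldown} and Corollaries \ref{fix1}, \ref{fixmany}. The only point worth double-checking is that the commutant identity for $\beta$ applies uniformly for all pairs $j<k$, which it does because the same $\Ad(U)$ simultaneously normalizes every $\mathcal{A}_{2kl}$. This corollary is thus the bridge between the index-1 reduction at the level of the infinite factors $\mathcal{A}_x$ and the eventual identification of $\Ad(U)\circ\alpha$ with the identity, which requires control over the finite local algebras where the Jones projections live.
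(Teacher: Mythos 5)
Your proof is correct and is essentially identical to the paper's own argument: both reuse the depth-$1$ circuit $U$ from Corollary \ref{fixmany}, both invoke the identification $A_{(x,z]}=\mathcal{A}_x'\cap\mathcal{A}_z$ from Remark \ref{subfactor_perspective}, and both conclude by the fact that a $*$-automorphism of $\mathcal{A}$ carries relative commutants to relative commutants.
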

\begin{proof}
    Choose a depth-$1$ circuit $U$ as in Corollary \ref{fixmany}. Then we have
    \begin{align*}
        \Ad(U)(\alpha(A_{(2jl,2kl]}))&=\Ad(U)(\alpha(\mathcal{A}_{2jl}'\cap \mathcal{A}_{2kl}))\\
        &=\Ad(U)(\alpha(\mathcal{A}_{2jl}))'\cap \Ad(U)(\alpha(\mathcal{A}_{2kl}))\\
        &=\mathcal{A}_{2jl}'\cap \mathcal{A}_{2kl}\\
        &=A_{(2jl,2kl]}.\qedhere
    \end{align*}
\end{proof}

\begin{thm} If $\alpha$ is a QCA with index 1, then it is a finite depth circuit.
\end{thm}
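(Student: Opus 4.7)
The plan is to follow the roadmap the authors sketch just before Lemma \ref{jonesproj}: produce two successive depth-$1$ circuits $\beta_1, \beta_2$ so that $\gamma_2 := \beta_2\circ\beta_1\circ \alpha$ fixes $\mathcal{A}_x$ setwise for every $x\in \mathbbm{Z}$, and then argue this forces $\gamma_2$ to be the identity. For the first step, I would invoke Corollary \ref{fixmany} with $l$ the spread of $\alpha$ to obtain $\beta_1$ such that $\gamma_1 := \beta_1 \circ \alpha$ fixes every $\mathcal{A}_{2kl}$; by Corollary \ref{fixloc} it also fixes each block algebra $A_{(2(k-1)l,2kl]}$ setwise, so in particular $\mathcal{A}_{2(k-1)l}\subseteq \gamma_1(\mathcal{A}_x)\subseteq \mathcal{A}_{2kl}$ for every $2(k-1)l\le x\le 2kl$.

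For the second step I would work one block at a time. Inside the block $[2(k-1)l, 2kl]$ I iterate Lemma \ref{pulldown} top-down: first apply it with $a = 2(k-1)l$, $b = 2kl$, $x = 2kl-1$ to produce $u_1\in A_{(2(k-1)l,2kl]}$ such that $\Ad(u_1)\circ \gamma_1$ sends $\mathcal{A}_{2kl-1}$ into $\mathcal{A}_{2kl-1}$, with equality forced by the index-$1$ argument used in Corollary \ref{fix1}, while $u_1\in \mathcal{A}_{2(k-1)l}'\cap \mathcal{A}_{2kl}$ keeps $\mathcal{A}_{2(k-1)l}$ and $\mathcal{A}_{2kl}$ fixed setwise. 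Iterating with $x = 2kl-2, 2kl-3,\ldots, 2(k-1)l+1$ against the successively reduced upper bound yields $u_j\in A_{(2(k-1)l, 2kl-j+1]}\subseteq \mathcal{A}_{2kl-j+1}$, which lies inside every $\mathcal{A}_{2kl-i}$ for $i < j$ and hence preserves those setwise. The product $v_k := u_{2l-1}\cdots u_1\in A_{(2(k-1)l,2kl]}$ then makes $\Ad(v_k)\circ \gamma_1$ fix every $\mathcal{A}_x$ in the block. Since the $v_k$ have pairwise disjoint supports, $\beta_2 := \Ad(\prod_k v_k)$ is a depth-$1$ circuit and $\gamma_2 := \beta_2\circ \gamma_1$ fixes every $\mathcal{A}_x$ setwise.

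For the final step I would note that $\gamma_2$ preserves the relative commutant $A_{(x-1,x+1]} = \mathcal{A}_{x-1}'\cap \mathcal{A}_{x+1} \cong \End(\tau^{\otimes 2}) = \mathbbm{C}\oplus \mathbbm{C}$ for every $x$, whose two minimal projections carry distinct traces $1/\phi^2$ and $1/\phi$. Since $\gamma_2$ preserves the unique tracial state, it must fix each of them; in particular it fixes the Jones projection $e_{x+1}$ for the basic construction $\mathcal{A}_{x-1}\subset \mathcal{A}_x\subset \mathcal{A}_{x+1}$ (the minimal projection of trace $1/[\mathcal{A}_x:\mathcal{A}_{x-1}] = 1/\phi^2$). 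Each local algebra $\End(\tau^{\otimes n})$ is a quotient of the Temperley-Lieb algebra and hence generated by these Jones projections, so they generate $A_\tau$, $\gamma_2$ is the identity, and $\alpha = \beta_1^{-1}\circ \beta_2^{-1}\in \FDQC(A_\tau)$. The principal obstacle is the iterated pulldown in the second step: one must verify that each adjustment preserves rather than disrupts the fixes already in place, which works precisely because the unitary at the $j$th iteration lies inside each $\mathcal{A}_{2kl-i}$ already pinned down and commutes with $\mathcal{A}_{2(k-1)l}$.
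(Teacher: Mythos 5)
Your proposal is correct and follows essentially the same route as the paper's proof: first use Corollary \ref{fixmany} to pin down the coarse-grained algebras $\mathcal{A}_{2kl}$, then iterate Lemma \ref{pulldown} block by block (with the index-$1$ equality forcing each pulled-down inclusion to be an equality, exactly as in the paper's induction on $n$), assemble the block unitaries into a second depth-$1$ circuit, and finally conclude via the Jones projections $e_x$ being the unique central projections of trace $1/\phi^2$ in the two-site relative commutants, which generate the net. The only cosmetic differences are your top-down indexing $x=2kl-1,\ldots,2(k-1)l+1$ versus the paper's induction variable $n=0,\ldots,2l-1$, and your extra mention of Temperley--Lieb to justify the generation claim.
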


\begin{proof}
    Let $l$ be the spread of $\alpha$. By Corollary \ref{fixmany} we may assume that $\alpha(\mathcal{A}_{2kl})=\mathcal{A}_{2kl}$ for all $k\in\mathbbm{Z}$. Fix $k\in\mathbbm{Z}$. We claim that for each $0\le n\le 2l-1$ there is $u_n\in A_{(2(k-1)l, 2kl]}$ such that $\Ad(u_n)(\alpha(\mathcal{A}_{2kl-m}))=\mathcal{A}_{2kl-m}$ for all $0\le m\le n$. We use induction on $n$. For $n=0$, we can simply choose $u_0=1$. Suppose we have found $u_{n-1}$. Then 
    \begin{align*}
        \mathcal{A}_{2(k-1)l}&=\Ad(u_{n-1})(\alpha(\mathcal{A}_{2(k-1)l}))\subseteq\Ad(u_{n-1})(\alpha(\mathcal{A}_{2kl-n}))\\
        &\subseteq \Ad(u_{n-1})(\alpha(\mathcal{A}_{2kl-n+1}))=\mathcal{A}_{2kl-n+1}.
    \end{align*}
    By Lemma \ref{pulldown} there is a unitary $v\in A_{(2(k-1)l,2kl-n+1]}$ such that 
    $$
    \mathcal{A}_{2(k-1)l}\subseteq \Ad(v)(\Ad(u_{n-1})(\alpha(\mathcal{A}_{2kl-n})))\subseteq \mathcal{A}_{(2kl-n+1)-1}=\mathcal{A}_{2kl-n}.
    $$
    Since $[\Ad(v)(\Ad(u_{n-1})(\alpha(\mathcal{A}_{2kl-n}))):\mathcal{A}_{2(k-1)l}]=[\mathcal{A}_{2kl-n}:\mathcal{A}_{2(k-1)l}]$, we have 
    $$
    \Ad(v)(\Ad(u_{n-1})(\alpha(\mathcal{A}_{2kl-n})))=\mathcal{A}_{2kl-n}.
    $$ 
    Since $v\in A_{(2(k-1)l,2kl-n+1]}\subseteq \mathcal{A}_{2kl-n+1}$, for $0\le m\le n-1$
    \begin{align*}
        \Ad(v)(\Ad(u_{n-1})(\alpha(\mathcal{A}_{2kl-m})))=\Ad(v)(\mathcal{A}_{2kl-m})=\mathcal{A}_{2kl-m}.
    \end{align*}
    Set $u_n=vu_{n-1}$. This proves the claim. In particular, there is a unitary $w_k=u_{2l-1}$ in $A_{(2(k-1)l, 2kl]}$ such that 
    $$
    \Ad(w_k)(\alpha(\mathcal{A}_x))=\mathcal{A}_x
    $$
    for all $2(k-1)l+1\le x\le 2kl$.
    Take $w_k$ for each $k\in\mathbbm{Z}$ and let $W=\prod_{k\in\mathbbm{Z}}w_k$. Let $x\in\mathbbm{Z}$ and $2(k-1)l+1\le x\le 2kl$ for some $k$. Then by arguing as in Corollary \ref{fixmany} we obtain
    $$
    \Ad(W)(\alpha(\mathcal{A}_x))=\Ad(w_k)(\alpha(\mathcal{A}_x))=\mathcal{A}_x.
    $$
    Thus, $\Ad(W)(\alpha(\mathcal{A}_x))=\mathcal{A}_x$ for all $x\in\mathbbm{Z}$. Arguing as in Corollary \ref{fixloc}, we have that $$
    \Ad(W)(\alpha(A_{[a,b]}))=A_{[a,b]}
    $$
    for all integers $a\le b$. 
    Recall that $A_{[x-1,x]}\cong \End(\tau^{\otimes 2})\cong \mathbbm{C}\oplus\mathbbm{C}$.
    The Jones projection $e_x$ for the inclusion $\mathcal{A}_{x-2}\subseteq \mathcal{A}_{x-1}\stackrel{e_x}{\subseteq}\mathcal{A}_x $ is the unique central projection in $A_{[x-1,x]}$ with tracial state value $1/\phi^2$. Thus, $\Ad(W)(\alpha(e_x))=e_x$. Since any local algebra is generated by the Jones projections, $\Ad(W)\circ\alpha$ is the identity.    
\end{proof}

\section{Toward general topological invariants of QCA}\label{Outlook}

A \textit{topological invariant} of QCA is a group $G$ and a homomorphism $\pi: \QCA(A)\rightarrow G$ with $\FDQC(A)\subseteq \text{ker}(\pi)$. Ideally $G$ should be some sort of ``well-understood" group and the homomorphism $\pi$ should be easily computable.

One approach to the classification of topological phases of $\QCA$ in terms of invariants is to find a \textit{complete} set of topological invariants. A finite family of topological invariants $\pi_{i}: \QCA\rightarrow G_{i}$ is \textit{complete} if  $\bigcap_i \text{ker}(\pi_{i})=\FDQC(A)$. Then the homomorphism

\begin{eqnarray*}
    &\prod_i \pi_{i}:\QCA(A)\rightarrow \prod_i G_{i}&\\
    \\
    &\alpha\mapsto (\pi_{1}(\alpha), \dots, \pi_{n}(\alpha))&
\end{eqnarray*}
induces an isomorphism of $\QCA(A)/\FDQC(A)$ onto its image.

The two examples of topological invariants we know of that apply to $\QCA$ on fusion spin chains are $\ind$ as developed here and the homomorphism $\textbf{DHR}: \QCA(A)\rightarrow \text{Aut}_{br}(\textbf{DHR}(A))$ from \cite{2304.00068}. This leads to the following question:

\begin{quest}
Let $A$ be a fusion spin chain. Is the pair $(\ind, \textbf{DHR})$ a complete set of topological invariants? If not, what are other topological invariants?
\end{quest}

Our analysis shows the pair is complete for the fusion category $\Fib$. Indeed, we show $\ind$ itself is a complete invariant. Moreover, $\text{Aut}_{br}(\mathcal{Z}(\Fib))$ is trivial. In general, however, it appears that $\ind$ and $\textbf{DHR}$ are not completely independent, as we can see from examining the generalized Kramers-Wannier translations from Example \ref{Kramers-Wannier}.

\nocite{*}
\bibliographystyle{alpha}
%\DeclareRobustCommand{\disambiguate}[3]{#2~#3}
\bibliography{Reference}{}
\end{document}